\newcommand{\Om}{\Omega}
\newcommand{\mbZ}{\mathbb{Z}}
\newcommand{\mbN}{\mathbb{N}}
\newcommand{\mc}{\mathcal}
\newcommand{\mcV}{\mathcal{V}}
\newcommand{\mcK}{\mathcal{K}}
\newcommand{\mcR}{\mathcal{R}}
\newcommand{\mcQ}{\mathcal{Q}}
\newcommand{\mcT}{\mathcal{T}}
\newcommand{\mcW}{\mathcal{W}}
\newcommand{\mcS}{\mathcal{S}}
\newcommand{\mcU}{\mathcal{U}}
\newcommand{\wt}{\widetilde}
\newcommand{\mf}{\mathsf}
\newcommand{\mr}{\mathrm}
\newcommand{\bfx}{\mathbf x}
\newcommand{\bfa}{\mathbf a}
\DeclareMathOperator{\tr}{tr}
\theoremstyle{plain}
\newtheorem{theorem}{Theorem}[section]
\newtheorem{corollary}[theorem]{Corollary}
\newtheorem{proposition}[theorem]{Proposition}
\newtheorem{lemma}[theorem]{Lemma}
\theoremstyle{definition}
\newtheorem{definition}[theorem]{Definition}
\newtheorem{example}[theorem]{Example}
\numberwithin{equation}{section}
\begin{document}

\title[Mal'tsev products]{Mal'tsev products of varieties, I}

\author[T. Penza]{T. Penza}
\address{Faculty of Mathematics and Information Science\\
Warsaw University of Technology\\
00-661 Warsaw, Poland}
\email{T.Penza@mini.pw.edu.pl}

\author[A.B. Romanowska]{A.B. Romanowska}
\address{Faculty of Mathematics and Information Science\\
Warsaw University of Technology\\
00-661 Warsaw, Poland}
\email{A.Romanowska@mini.pw.edu.pl}

\thanks{The first author's research was supported by the Warsaw University of Technology under grant number 504/04259/1120.}

\keywords{Mal'tsev product of varieties. Identities true in Mal'tsev products.}

\subjclass{08B05, 08C15, 08A30}

\begin{abstract}
We investigate the Mal'tsev product $\mcV \circ \mcW$ of two varieties $\mcV$ and  $\mcW$ of the same similarity type. Such a product is usually a quasivariety but not necessarily a variety. We give an equational base for the variety generated by $\mcV \circ \mcW$ in terms of identities satisfied in $\mcV$ and $\mcW$. Then the main result provides a new sufficient condition for $\mcV \circ \mcW$ to be a variety: If $\mcW$ is an idempotent variety and there are terms $f(x,y)$ and $g(x,y)$ such that $\mcW$ satisfies the identity $f(x,y) = g(x,y)$ and $\mcV$ satisfies the identities $f(x,y) = x$ and $g(x,y) = y$, then $\mcV \circ \mcW$ is a variety.
We also provide a number of examples and applications of this result.
\end{abstract}

\maketitle

\section{Introduction}

In his 1967 paper \cite{M67}, Mal'tsev introduced a product of classes of algebras that is now known as the Mal'tsev product.
Let $\tau\colon \Omega \rightarrow \mathbb{N}$ be a similarity type of
$\Omega$-algebras.
Let $\mc{B}$ and $\mc{C}$ be subclasses of a class $\mc{D}$ of $\Om$-algebras. Then the  \emph{Mal'tsev product $\mc{B} \circ_\mc{D} \mc{C}$ of $\mc{B}$ and $\mc{C}$ relative to $\mc{D}$} consists of all algebras $A$ in $\mc{D}$, with a congruence $\theta$, such that $A/\theta$ belongs to $\mc{C}$ and every congruence class of $\theta$ that is a subalgebra of $A$ belongs to $\mc{B}$. If $\mc{D}$ is the variety $\mcT$ of all $\Om$-algebras, then the Mal'tsev product $\mc{B} \circ_{\mc{T}} \mc{C}$ is called simply the \emph{Mal'tsev product} of $\mc{B}$ and $\mc{C}$, and is denoted by $\mc{B} \circ \mc{C}$.

In the same paper, Mal'tsev proved that, if $\mcQ$ and $\mcR$ are subquasivarieties of a quasivariety $\mcK$ of $\Om$-algebras, then the Mal'tsev product $\mc{Q} \circ_{\mcK} \mc{R}$ is also a quasivariety, albeit under the assumption of a finite similarity type for $\mcK$. If one assumes that the second quasivariety $\mc{R}$ is idempotent, then there is no restriction on the type.
This ``closure'' property makes the class of subquasivarieties of a given quasivariety a natural domain for consideration of Mal'tsev products.
Moreover,
the congruence $\theta$ in the definition of Mal'tsev product of two quasivarieties may be taken to be the $\mcR$-replica congruence $\varrho$ of $A$, the smallest congruence of $A$ whose induced quotient falls into the quasivariety $\mcR$. (See e.g. \cite[Ch.~3]{RS02}.) So in this case, the Mal'tsev product may be described formally as follows:
\begin{equation}
\mc{Q} \circ_{\mcK} \mc{R} = \{A \in \mcK \mid (\forall{a \in A}) \,\, (a/\varrho \leq A \, \Rightarrow \, a/\varrho \in \mcQ)\}.
\end{equation}

Idempotent elements of $\mcR$-algebras (i.e. algebras in $\mcR$) play a significant role in the theory of Mal'tsev products $\mc{Q} \circ_{\mcK} \mc{R}$ of quasivarieties $\mcQ$ and $\mcR$.
Recall that an element $a$ of an $\Om$-algebra $A$ is \emph{idempotent} if $\{a\}$ is a subalgebra of $A$. An algebra $A$ is idempotent if every element is idempotent. A class of $\Om$-algebras is idempotent if every member algebra is idempotent.

The following easy lemma shows a general relation between subalgebras of an algebra and idempotents of its quotients.

\begin{lemma}\label{L:subcon}
Let $A$ be an $\Om$-algebra with congruence $\theta$, and let $a \in A$. Then
$a/\theta$ is a subalgebra of $A$ if and only if $a/\theta$ is an idempotent element of $A/\theta$.
\end{lemma}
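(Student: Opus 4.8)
The plan is to unwind both sides of the equivalence directly from the definitions and then connect them using the compatibility of the congruence $\theta$ with the basic operations. On the left, the statement that $a/\theta$ is a subalgebra of $A$ means precisely that the set $a/\theta$ is closed under every basic operation $\omega \in \Om$: for each $\omega$ of arity $n = \tau(\omega)$ and all $b_1,\dots,b_n \in a/\theta$, we have $\omega^A(b_1,\dots,b_n) \in a/\theta$. On the right, $a/\theta$ being an idempotent element of $A/\theta$ means, by the definition recalled before the lemma, that $\{a/\theta\}$ is a subalgebra of $A/\theta$, which amounts to the condition $\omega^{A/\theta}(a/\theta,\dots,a/\theta) = a/\theta$ for every $\omega \in \Om$. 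So the task reduces to passing between these two closure conditions.

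For the forward implication I would assume that $a/\theta$ is a subalgebra of $A$. Since $\theta$ is reflexive we have $a \in a/\theta$, so closure applied to the constant tuple $(a,\dots,a)$ gives $\omega^A(a,\dots,a) \in a/\theta$, i.e. $\omega^A(a,\dots,a) \mathrel{\theta} a$. Projecting to the quotient, $\omega^{A/\theta}(a/\theta,\dots,a/\theta) = \big(\omega^A(a,\dots,a)\big)/\theta = a/\theta$, which is exactly idempotency of $a/\theta$ in $A/\theta$. Conversely, assume $a/\theta$ is idempotent in $A/\theta$ and take arbitrary $b_1,\dots,b_n \in a/\theta$, so $b_i \mathrel{\theta} a$ for each $i$. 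Because $\theta$ is a congruence, $\omega^A(b_1,\dots,b_n) \mathrel{\theta} \omega^A(a,\dots,a)$, and idempotency gives $\omega^A(a,\dots,a) \mathrel{\theta} a$; transitivity then yields $\omega^A(b_1,\dots,b_n) \in a/\theta$. Hence $a/\theta$ is closed under $\omega$, and as $\omega$ was arbitrary, $a/\theta$ is a subalgebra of $A$.

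There is no genuine obstacle here; the only point requiring a little care is the treatment of nullary operations (constants) and the nonemptiness requirement for a subalgebra. The former is handled uniformly by taking $n = 0$ in the argument above: for a nullary $\omega$, membership $\omega^A \in a/\theta$ corresponds to $\omega^{A/\theta} = a/\theta$ under the same projection and congruence reasoning. Nonemptiness is automatic, since $a \in a/\theta$ always holds. Thus both directions follow from the elementary interplay between closure and the substitution property of $\theta$, confirming that the lemma is, as stated, routine.
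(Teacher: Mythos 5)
Your proof is correct and takes essentially the same route as the paper's: both directions unwind the definition of subalgebra and of idempotent element, using the constant tuple $(a,\dots,a)$ for the forward implication and the compatibility of $\theta$ with each operation $\omega$ (plus transitivity) for the converse. Your extra remark on nullary operations is a harmless addition, since the paper assumes types without nullary operation symbols.
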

\begin{proof}
If $a/\theta$ is a subalgebra of $A$, then for $a_1, \dots, a_n \in a/\theta$ and each ($n$-ary)
$\omega \in \Om$, we have $a_1 \dots a_n \omega \, \in \, a/\theta$. Hence $a/\theta \dots a/\theta \, \omega = a\dots a \omega /\theta = a/\theta$.

Now let $a/\theta$ be an idempotent element of $A/\theta$. This means that $a/\theta \dots a/\theta \, \omega = a/\theta$. Hence for $a_1, \dots, a_n \in a/\theta$ and each $n$-ary $\omega \in \Om$, we have $a_1 \dots a_n \omega /\theta = a_1/\theta \dots a_n/\theta \, \omega = a/\theta \dots a/\theta \, \omega = a/\theta$. Consequently $a/\theta$ is a subalgebra of $A$.
\end{proof}

Let us note that if no non-trivial $\mcR$-algebra has idempotent elements, then $\mc{Q} \circ_{\mcK} \mc{R}$ contains all $\mcK$-algebras, since each of them has the $\mcR$-replica congruence and no corresponding congruence classes are subalgebras. So in this case $\mc{Q} \circ_{\mcK} \mc{R}$ coincides with the class $\mcK$.

Another extreme case concerns the situation when $\mcR$ is an idempotent quasivariety. In this case, each
$\varrho$-class of an algebra $A$ in $\mc{Q} \circ_{\mcK} \mc{R}$ is a subalgebra of $A$, and then
\begin{equation}
\mc{Q} \circ_{\mcK} \mc{R} = \{A \in \mcK \mid  (\forall{a \in A}) \,\,(a/\varrho \in \mcQ)\}.
\end{equation}

Finally it may happen that, though $\mcR$ is not an idempotent quasivariety, all $\mcR$-algebras have some idempotent elements. (Examples are provided by inverse semigroups.)

We usually assume that $\Om$-algebras considered in this paper
have no nullary basic operations. Note however that this assumption is not essential. If $\Om$ contains a symbol $c$ of a nullary operation, one can replace it by a symbol $c(x)$ of a unary operation as follows. If $\mcQ$ is a quasivariety of $\Om$-algebras of a type $\tau$ containing the symbol $c$, then instead of $\mcQ$ one can consider the equivalent quasivariety $\mcQ'$ of the type obtained from $\tau$ by taking the symbol $c(x)$ in place of $c$, satisfying the quasi-identities true in $\mcQ$ and additionally the identity $c(x) = c(y)$. Hence, in particular, the unary operation is constant on all algebras of $\mcQ'$.
Sometimes we have to consider varieties of algebras with at least one non-unary operation and with no nullary basic operations. Such a similarity type is called \emph{plural} \cite{RS02}.

An $\Om$-term $t$ such that $\mcV$ satisfies the identities
\begin{equation}\label{E:tid}
t \dots t\,\omega = t,
\end{equation}
for each $\omega \in \Om$, will be called a \emph{term idempotent} of $\mcV$.

For varieties of algebras the following version of \cite[Thm.~~9]{C75} holds.

\begin{lemma}\label{L:termidem}
Let $\mcV$ be a variety of $\Om$-algebras of a type without nullary operation symbols. Then each algebra in $\mcV$ has idempotents if and only if the variety $\mcV$ has a term idempotent. Moreover, if $\mcV$ is of a plural type and has a term idempotent, then it has term idempotents of any arity $n \geq 1$.
\end{lemma}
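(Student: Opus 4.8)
The plan is to dispatch the equivalence by a standard free-algebra argument and then bootstrap to all arities using plurality. For the direction that a term idempotent forces idempotents, suppose $\mcV$ has a term idempotent $t$, so that $\mcV$ satisfies $t\dots t\,\omega = t$ for every $\omega\in\Om$. Given $A\in\mcV$ and $a\in A$, I would let $b\in A$ be the value of $t$ with all of its variables evaluated at $a$. Reading the defining identities of $t$ off at this substitution gives $b\dots b\,\omega = b$ for each $\omega\in\Om$, so $\{b\}$ is closed under every basic operation; hence $\{b\}$ is a subalgebra and $b$ is an idempotent of $A$.

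For the converse I would pass to the $\mcV$-free algebra $F=F_{\mcV}(x)$ on a single generator $x$. Since $\Om$ has no nullary symbols, every element of $F$ has the form $t^F(x)$ for some unary $\Om$-term $t$, and in particular $F$ is nonempty. By hypothesis the algebra $F\in\mcV$ has an idempotent, say $b=t^F(x)$; idempotency reads $t^F(x)\dots t^F(x)\,\omega = t^F(x)$ in $F$ for each $\omega\in\Om$. Because $F$ is free on $x$, an equation between such term-values holds in $F$ exactly when the corresponding identity holds throughout $\mcV$; thus $\mcV$ satisfies $t\dots t\,\omega = t$ for every $\omega$, so $t$ is a term idempotent, completing the equivalence.

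For the final assertion I read ``term idempotent of arity $n$'' as a term idempotent in which all of $x_1,\dots,x_n$ occur. First, any term idempotent $t(x_1,\dots,x_k)$ yields a unary one by identifying variables: $s(x):=t(x,\dots,x)$ again satisfies $s\dots s\,\omega = s$. The crucial remark is that a unary term idempotent composes on the outside: substituting an arbitrary $\Om$-term $w$ for the variable of $s$ in the identities $s\dots s\,\omega = s$ shows that $s(w)$ is once more a term idempotent, with no need to verify idempotency of a compound term by hand. Now I would invoke plurality: fix $\omega_0\in\Om$ of arity $m\ge 2$. Building $\Om$-terms from $\omega_0$ alone, each application turns one leaf into $m$, so terms with $1+k(m-1)$ leaves exist for all $k\ge 0$; choosing $k$ with $1+k(m-1)\ge n$ and assigning $x_1,\dots,x_n$ to $n$ of the leaves (padding the remaining leaves with $x_1$) produces a term $w(x_1,\dots,x_n)$ in which every $x_i$ occurs. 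Then $s\bigl(w(x_1,\dots,x_n)\bigr)$ is a term idempotent of arity $n$.

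I expect the only genuinely delicate points to be the two uses of structure rather than computation. The reverse implication hinges on stating carefully that idempotency of a single element of $F_{\mcV}(x)$ \emph{is} precisely the family of identities defining a term idempotent. In the last part, plurality is indispensable: in a type with unary operations only, every term has a single leaf and hence involves just one variable, so no term idempotent of arity $n\ge 2$ could exist; the operation $\omega_0$ of arity $m\ge 2$ is exactly what supplies enough leaves, and the outside-composition trick then upgrades a single unary term idempotent to one of each arity.
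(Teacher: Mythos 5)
Your proof is correct and follows essentially the same route as the paper's: substituting a single element for all variables of the term idempotent gives one direction, the one-generated $\mcV$-free algebra gives the converse, and the arbitrary-arity claim is obtained by collapsing to a unary term idempotent and composing it with an $n$-variable term, which exists by plurality. You are merely more explicit than the paper at two points --- spelling out why idempotency of an element of the free algebra translates into identities valid in all of $\mcV$, and constructing the $n$-variable term from an at-least-binary operation --- but the underlying argument is the same.
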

\begin{proof}
If all algebras in $\mcV$ have idempotents, then in particular also the free algebra  $X\Om$ on one-element set $X$ has an idempotent, which is some unary term $t$. And as an idempotent, this term satisfies \eqref{E:tid}.

On the other hand, if there is an $n$-ary $\Om$-term $t$ such that $\mcV$ satisfies \eqref{E:tid}, then for any algebra $A$ in $\mcV$ and any $a \in A$, one has
\[
(a \dots a t) \dots (a \dots a t) \omega = a \dots a t
\]
for each $\omega \in \Om$, whence $a \dots a t$ is an idempotent of $A$.

Finally note that if $x_1 \dots x_n t$ is an $\Om$-term satisfying \eqref{E:tid} for each $\omega \in \Om$, then the unary $x \dots x t$ also satisfies it. And moreover for arbitrary $\Om$-term $u$ of any arity, we also have
\[
(u \dots u t) \dots (u\dots u t) \, \omega = u \dots u t
\]
for each $\omega \in \Om$.
\end{proof}

A special case concerns the situation of so-called polarized varieties.
A variety $\mcV$ of $\Om$-algebras is \emph{polarized} if there is a unary term idempotent of $\mcV$ that is constant on every member of $\mcV$.
This constant is called the \emph{pole} of the algebra, and the algebra is called \emph{polarized}.
The pole of an algebra is unique (if it exists), and a congruence class of a polarized algebra is a subalgebra if and only if it is the congruence class of the pole. A (quasi)variety of $\Om$-algebras is \emph{polarized} if all its members are polarized algebras. (See Mal'tsev~\cite{M67}.)

\begin{corollary}\label{C:idempolar}
Each algebra in $\mcV$ has precisely one idempotent if and only if the variety $\mcV$ is polarized.
\end{corollary}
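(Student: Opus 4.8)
The plan is to prove the two implications separately, using Lemma~\ref{L:termidem} to manufacture a unary term idempotent and the observation from its proof that such a term can be read off the free algebra on one generator. Throughout I would keep the standing assumption that the type has no nullary operation symbols, so that Lemma~\ref{L:termidem} applies, and I would use only the definitions of idempotent element, term idempotent, and pole.

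For the ``if'' direction, suppose $\mcV$ is polarized, witnessed by a unary term idempotent $t$ that is constant on every member. Fix $A \in \mcV$ and write $p$ for the common value of $t$ on $A$, i.e.\ the pole. Since $t$ is a term idempotent, $t(a)$ is an idempotent of $A$ for every $a$, so $p$ is an idempotent of $A$; this gives existence. For uniqueness I would take an arbitrary idempotent $b$ of $A$: because $\{b\}$ is a subalgebra, every unary term, in particular $t$, maps $b$ into $\{b\}$, so $t(b) = b$; but $t(b) = p$ by constancy, whence $b = p$. Thus $p$ is the unique idempotent of $A$.

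For the ``only if'' direction, assume every algebra in $\mcV$ has precisely one idempotent. In particular every algebra has at least one idempotent, so Lemma~\ref{L:termidem} yields a term idempotent; moreover its proof produces a \emph{unary} one $t$, namely an idempotent of the one-generated free algebra, which as a unary term satisfies \eqref{E:tid}. For any $A \in \mcV$ and any $a \in A$, the element $t(a)$ is idempotent, and since $A$ has only one idempotent, $t(a)$ cannot depend on $a$. Hence $t$ is constant on each $A \in \mcV$, which is exactly the assertion that $t$ is a unary term idempotent constant on every member; therefore $\mcV$ is polarized.

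I expect the only delicate point to be ensuring that the term idempotent can be chosen unary rather than of higher arity, since the definition of polarized demands a unary witness. This is secured by the construction inside the proof of Lemma~\ref{L:termidem}, which extracts the term as an idempotent element of the free algebra on a single generator, so no appeal to the plural-type clause is required. The remaining steps are routine unwindings of the definitions.
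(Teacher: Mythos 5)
Your proof is correct and follows exactly the route the paper intends: the corollary is stated without proof because it is a direct unwinding of Lemma~\ref{L:termidem} (whose proof indeed yields a \emph{unary} term idempotent from the one-generated free algebra) together with the definitions of idempotent element and polarized variety. Both directions are argued soundly, including the key uniqueness step that a singleton subalgebra $\{b\}$ is closed under the unary term $t$, so $t(b)=b$ forces $b$ to equal the pole.
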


In this paper we are interested in Mal'tsev products $\mcV \circ \mcW$ of two varieties $\mcV$ and $\mcW$ of the same type $\tau \colon \Om \rightarrow \mbN$. Such a product is usually a quasivariety, however need not be a variety.
For some results providing sufficient conditions for the Mal'tsev product of two varieties to be a variety see e.g. \cite[S.~57,~63]{G79}, \cite{N67}, \cite{M67}, \cite{I84a} and \cite{I84b}.
They usually concern Mal'tsev products relative to some special varieties. The best known may be the following theorem of Mal'tsev.

\begin{theorem}\cite{M67}\label{T:MT}
If $\mcV$ and $\mcW$ are subvarieties of a congruence permutable polarized variety $\mcK$, then $\mcV \circ_{\mcK} \mcW$ is a variety.
\end{theorem}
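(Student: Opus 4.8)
The plan is to prove that $\mcV\circ_\mcK\mcW$ is a variety by verifying closure under homomorphic images; closure under subalgebras and direct products (so that the class is a quasivariety) is part of the general theory of Mal'tsev products and is in any case routine in the present polarized setting. Thus homomorphic images are the only substantive point, and it is precisely there that congruence permutability will enter.

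First I would record a convenient description of membership. Since $\mcK$ is polarized, every $A\in\mcK$ has a unique idempotent, its pole $p_A$, obtained by evaluating the constant unary term idempotent $t$ of $\mcK$. Let $\varrho_A$ denote the $\mcW$-replica congruence of $A$. The quotient $A/\varrho_A$ lies in $\mcW\subseteq\mcK$ and is therefore polarized, so by Lemma~\ref{L:subcon} and Corollary~\ref{C:idempolar} the only $\varrho_A$-class that is a subalgebra of $A$ is the pole class $p_A/\varrho_A$. Consequently, using the description of the Mal'tsev product recalled in the introduction,
\[
A\in\mcV\circ_\mcK\mcW \quad\Longleftrightarrow\quad p_A/\varrho_A\in\mcV ,
\]
the condition $A/\varrho_A\in\mcW$ being automatic.

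Now take $A\in\mcV\circ_\mcK\mcW$ and a surjective homomorphism $q\colon A\to B$ with $B\in\mcK$, and set $\kappa=\ker q$. I would first identify the $\mcW$-replica congruence $\varrho_B$ of $B$: pulled back along $q$ it is the smallest congruence of $A$ that contains $\kappa$ and has quotient in $\mcW$, namely the join $\lambda:=\varrho_A\vee\kappa$. Thus $B/\varrho_B\cong A/\lambda$, a quotient of $A/\varrho_A\in\mcW$, so $B/\varrho_B\in\mcW$ as required. Since $q$ preserves the term $t$, it carries poles to poles, $q(p_A)=p_B$, and the pole class of $B$ is $p_B/\varrho_B=q\bigl(p_A/\lambda\bigr)$. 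Writing $D:=p_A/\varrho_A$ and $C:=p_A/\lambda$, we have $D\in\mcV$ by the criterion above and $D\subseteq C$, but a priori $C$ is strictly larger and need not lie in $\mcV$. This is exactly where congruence permutability is used: the join equals the relational composite, $\lambda=\varrho_A\vee\kappa=\kappa\circ\varrho_A$, so $a\in C$ iff there is $c$ with $a\mathrel{\kappa}c$ and $c\mathrel{\varrho_A}p_A$, i.e.\ $c\in D$. Hence $C$ is the $\kappa$-saturation of $D$, and since $a\mathrel{\kappa}c$ forces $q(a)=q(c)$, we get $q(C)=q(D)$. Therefore the pole class of $B$ equals $q(D)$, a homomorphic image of $D\in\mcV$ under $q|_D$; as $\mcV$ is a variety, $q(D)\in\mcV$, and the membership criterion gives $B\in\mcV\circ_\mcK\mcW$.

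The main obstacle, and the heart of the theorem, is the step just described: after passing to the quotient $B$, the replica congruence jumps to the join $\varrho_A\vee\kappa$, whose pole class $C$ can be far larger than the pole class $D\in\mcV$ of $A$, so there is no reason a priori for $C$, or for its image in $B$, to lie in $\mcV$. Congruence permutability resolves this by collapsing the join to the single composite $\kappa\circ\varrho_A$, which forces $C$ to be merely the $\kappa$-saturation of $D$ and hence $q(C)=q(D)$; this is the only place the hypothesis is needed.
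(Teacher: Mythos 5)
Your proof is correct. There is nothing in the paper to compare it against: Theorem~\ref{T:MT} is quoted from Mal'tsev \cite{M67} without proof, and the paper only remarks that essentially the same argument yields Theorem~\ref{T:BT}. Your argument is the classical one: reduce to closure under homomorphic images, characterize membership by ``the pole class of the $\mcW$-replica congruence lies in $\mcV$'' (via Lemma~\ref{L:subcon} and Corollary~\ref{C:idempolar}), identify the replica congruence of $B=A/\kappa$ with $\varrho_A\vee\kappa$, and use permutability to collapse the join to $\kappa\circ\varrho_A$, so that the pole class of the join is exactly the $\kappa$-saturation of the pole class $D\in\mcV$ and its image in $B$ is $q(D)\in\mcV$. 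One caveat deserves attention: when you wave off closure under subalgebras and products by appeal to ``the general theory of Mal'tsev products,'' note that the quasivariety theorem as recalled in the paper's introduction requires either a finite similarity type or idempotence of the second factor, and neither is assumed here; so the direct verification ``in the polarized setting'' that you mention in passing is genuinely needed rather than optional. It is, as you say, routine: for $B\leq A$ one has $\varrho_B\subseteq\varrho_A\cap B^2$, so the pole class of $B$ is a subalgebra of $p_A/\varrho_A\in\mcV$, and for products $\varrho_{\prod A_i}\subseteq\prod\varrho_{A_i}$, so the pole class embeds as a subalgebra of $\prod\bigl(p_{A_i}/\varrho_{A_i}\bigr)\in\mcV$; but these two lines should be written out, since they use the same pole-class criterion your main argument rests on.
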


Such products were considered earlier in the case of groups (see Hanna Neumann's book \cite{N67} on varieties of groups).
The Mal'tsev product of any two varieties of groups relative to the variety of all groups is a variety. Sufficient conditions of a kind similar to Theorem~\ref{T:MT} were then obtained by A.A.~Iskander in \cite{I84a} and \cite{I84b}, and some versions of Theorem~\ref{T:MT} by C. Bergman in \cite{B20} and \cite{B18}.

In particular, almost the same proof as the proof of Mal'tsev's Theorem~\ref{T:MT} may be used to show the following.

\begin{theorem}\cite{B18}\label{T:BT}
If $\mcV$ and $\mcW$ are idempotent subvarieties of a congruence permutable variety $\mcK$, then $\mcV \circ_{\mcK} \mcW$ is a variety.
\end{theorem}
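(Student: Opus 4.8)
The plan is to show that $\mcV \circ_{\mcK} \mcW$, which is already a quasivariety by Mal'tsev's theorem (because $\mcW$ is idempotent, no restriction on the type is needed), is closed under homomorphic images; since a quasivariety is closed under $S$ and $P$, closure under $H$ will make it a variety by Birkhoff's theorem. So let $A \in \mcV \circ_{\mcK} \mcW$, let $\phi \colon A \to B$ be a surjective homomorphism with kernel congruence $\beta$, and observe that $B \in \mcK$ automatically. Writing $\varrho_A$ and $\varrho_B$ for the $\mcW$-replica congruences of $A$ and $B$, I must verify that every $\varrho_B$-class of $B$ lies in $\mcV$; the quotient $B/\varrho_B$ lies in $\mcW$ by the very definition of the replica congruence, and since $\mcW$ is idempotent, Lemma~\ref{L:subcon} guarantees that every $\varrho_B$-class is then a subalgebra of $B$.

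First I would identify $\varrho_B$. Since the congruences of $B = A/\beta$ correspond to the congruences $\psi$ of $A$ with $\psi \supseteq \beta$, and $(A/\beta)/(\psi/\beta) \cong A/\psi$ lies in $\mcW$ exactly when $\psi \supseteq \varrho_A$, the smallest such $\psi$ is $\varrho_A \vee \beta$; hence $\varrho_B = (\varrho_A \vee \beta)/\beta$. This is precisely the step where congruence permutability of $\mcK$ enters, giving $\varrho_A \vee \beta = \varrho_A \circ \beta = \beta \circ \varrho_A$. Consequently each $\varrho_B$-class of $B$ is the $\phi$-image of a single $(\varrho_A \circ \beta)$-class of $A$.

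The heart of the argument — and the step I expect to be the main obstacle — is to show that such a $\varrho_B$-class $C$ already lies in $\mcV$. Let $D = \phi^{-1}(C)$, a single $(\varrho_A \vee \beta)$-class and hence a subalgebra of $A$; since $\varrho_A \subseteq \varrho_A \vee \beta$, the set $D$ is a union of full $\varrho_A$-classes. Restricting $\varrho_A$ and $\beta$ to $D$ yields congruences $\alpha$ and $\gamma$ of $D$ with $\ker(\phi|_D) = \gamma$, so $C \cong D/\gamma$. Because $D$ is one $(\varrho_A \circ \beta)$-class, a short check shows $\alpha \circ \gamma = D \times D$: if $x,y \in D$ and $x\,\varrho_A\,z\,\beta\,y$, then $z \in D$ as it is $\varrho_A$-related to $x \in D$, so the witnessing middle element stays inside $D$. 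Now fix any $\varrho_A$-class $E \subseteq D$, with a base point $e_0 \in E$; then $E \in \mcV$, because $A \in \mcV \circ_{\mcK} \mcW$ and $\mcW$ is idempotent. For an arbitrary $c \in C$ choose $d \in D$ with $\phi(d) = c$; from $\alpha \circ \gamma = D \times D$ we get $w$ with $e_0\,\alpha\,w\,\gamma\,d$, whence $w \in E$ and $\phi(w) = \phi(d) = c$. Thus $\phi|_E \colon E \to C$ is surjective, exhibiting $C$ as a homomorphic image of the $\mcV$-algebra $E$; since $\mcV$ is a variety, $C \in \mcV$.

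This verifies that $B \in \mcV \circ_{\mcK} \mcW$ and establishes closure under homomorphic images, completing the proof. I note that the only places idempotency of $\mcW$ is used are to secure the quasivariety property without a type restriction and to make the replica classes subalgebras via Lemma~\ref{L:subcon}; permutability is used exactly once, to pass from $\varrho_A \vee \beta$ to $\varrho_A \circ \beta$, which is what forces each $\varrho_B$-class to be covered by a single $\varrho_A$-class.
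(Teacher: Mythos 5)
Your proof is correct. Note that the paper itself gives no proof of this theorem: it cites Bergman's notes \cite{B18} and remarks that it follows by ``almost the same proof'' as Mal'tsev's Theorem~\ref{T:MT}, and your argument is exactly that standard argument --- the product is a quasivariety (idempotency of $\mcW$ removing the type restriction), so it suffices to prove closure under homomorphic images, which you obtain from congruence permutability via $\varrho_A \vee \beta = \varrho_A \circ \beta$, forcing each $\mcW$-replica class of the image $B$ to be the surjective image of a single $\varrho_A$-class of $A$ and hence to lie in $\mcV$. It is also worth observing that your proof never uses idempotency of $\mcV$, which confirms the paper's remark immediately after the theorem that it suffices to assume only $\mcW$ is idempotent.
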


In fact, it is sufficient to assume that only the variety $\mcW$ is idempotent.

Another special case concerns the Mal'tsev product of a strongly irregular variety $\mcV$ of any plural type $\tau$ and the variety of semilattices considered as the variety of the same type $\tau$. Recall that a strongly irregular variety is a variety $\mcV_t$ defined by a set of regular identities and a (strongly irregular) identity $t(x,y) = x$, where $t(x,y)$ is a binary term containing both variables $x$ and $y$. The variety $\mcS$ of semilattices may be considered as the variety $\mcS_{\tau}$ of any plural type $\tau$ (definitionally) equivalent to $\mcS$. (See \cite{BPR19} for details.)

\begin{theorem}\cite{BPR19}\label{T:BPR}
If $\mcV_t$ is a strongly irregular variety of a plural type $\tau$ and $\mcS_{\tau}$ is the variety of the same type $\tau$ equivalent to the variety of semilattices, then $\mcV_t \circ \mcS_{\tau}$ is a variety.
\end{theorem}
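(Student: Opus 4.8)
The plan is to show that $\mcV_t \circ \mcS_\tau$ coincides with the \emph{regularization} $\wt{\mcV_t}$ of $\mcV_t$, the variety axiomatized by all \emph{regular} identities (those with the same variables on both sides) holding in $\mcV_t$. Since $\wt{\mcV_t}$ is defined by identities it is automatically a variety, so the theorem follows once the two classes are shown to be equal. Throughout I use that $\mcS_\tau$ is idempotent, so by Lemma~\ref{L:subcon} every class of the $\mcS_\tau$-replica congruence $\varrho$ of an algebra $A$ is a subalgebra; hence $A \in \mcV_t \circ \mcS_\tau$ exactly when the quotient $S := A/\varrho$ is a semilattice of type $\tau$ and each $\varrho$-class $A_s$, $s \in S$, lies in $\mcV_t$. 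Thus the members of $\mcV_t \circ \mcS_\tau$ are precisely the semilattice-indexed systems of $\mcV_t$-algebras, and the real content is to promote such a system to a genuine P\l onka sum.

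For the inclusion $\wt{\mcV_t} \subseteq \mcV_t \circ \mcS_\tau$ I would appeal to the classical representation theorem for regularizations of strongly irregular varieties of plural type (see \cite{RS02}): every algebra of $\wt{\mcV_t}$ is isomorphic to a P\l onka sum of $\mcV_t$-algebras over a semilattice, and here the plurality of $\tau$ is exactly what makes the indexing semilattice operation and the P\l onka functions available. In any such P\l onka sum the partition into fibres is a congruence with semilattice quotient whose classes are the fibres, and these lie in $\mcV_t$; so the algebra belongs to $\mcV_t \circ \mcS_\tau$.

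The substantive direction is $\mcV_t \circ \mcS_\tau \subseteq \wt{\mcV_t}$, where the term $t$ supplies the missing P\l onka structure. Fix $A \in \mcV_t \circ \mcS_\tau$ with quotient semilattice $S$, ordered by $r \le s \iff r \cdot s = r$, and fibres $(A_s)$. For $r \le s$ and $a \in A_s$ I would define a transition map by $\phi_{s,r}(a) = t(a,b)$ with $b \in A_r$ arbitrary. Projecting to $S$ and using that in a semilattice $t(x,y)$ evaluates to the product $x \cdot y$ of its variables gives $t(a,b) \in A_{s \cdot r} = A_r$; the fibrewise identity $t(x,y)=x$ together with the regular identities of $\mcV_t$ should then yield that $\phi_{s,r}$ is independent of the choice of $b$, that $\phi_{s,s}$ is the identity, that the $\phi$'s compose correctly, and that every basic operation of $A$ factors through the $\phi$'s into the corresponding operation computed inside a single fibre. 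This exhibits $A$ as a P\l onka sum of $\mcV_t$-algebras over $S$, and since P\l onka sums of $\mcV_t$-algebras satisfy all regular identities true in $\mcV_t$, we conclude $A \in \wt{\mcV_t}$.

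I expect the well-definedness of the maps $\phi_{s,r}$ to be the crux, and the difficulty is not merely formal. For $b,b' \in A_r$ the desired equality $t(a,b) = t(a,b')$ becomes, after substituting $t(b,b')=b$ and $t(b',b)=b'$ (valid in the fibre $A_r \in \mcV_t$), an instance of the regular identity $t\bigl(x,t(y,z)\bigr) = t\bigl(x,t(z,y)\bigr)$ of $\mcV_t$ evaluated at $a,b,b'$ — which is exactly the kind of identity we are ultimately trying to prove for $A$. I would break this apparent circularity by deriving the handful of required instances directly from the explicit quasi-equational base of $\mcV_t \circ \mcS_\tau$ furnished by Mal'tsev's construction \cite{M67}, together with the idempotence of the fibres, and only then reading off the full family of regular identities from the resulting P\l onka decomposition. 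Finally, I note that this theorem is the instance $f(x,y) = t(x,y)$, $g(x,y) = t(y,x)$ of the paper's main theorem: indeed $\mcV_t$ satisfies $t(x,y)=x$ and $t(y,x)=y$, while $\mcS_\tau$, being commutative, satisfies $t(x,y)=t(y,x)$, so the general argument provides an alternative uniform route to the same conclusion.
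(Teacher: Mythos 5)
Your route rests on the claim that $\mcV_t \circ \mcS_\tau$ coincides with the regularization $\wt{\mcV_t}$, and that claim is false: only the inclusion $\wt{\mcV_t} \subseteq \mcV_t \circ \mcS_\tau$ holds, and it is strict in general. Take $\mcV_t = \mc{LZ}$, the variety of left-zero semigroups, with $t(x,y) = x \cdot y$, and let $A = \{a,b,c,d\}$ be the groupoid in which $\{a,b\}$ and $\{c,d\}$ are left-zero subsemigroups, while $x \cdot y = y$ whenever $x \in \{a,b\}$ and $y \in \{c,d\}$, and $x \cdot y = x$ whenever $x \in \{c,d\}$ and $y \in \{a,b\}$. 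The partition $\{\{a,b\},\{c,d\}\}$ is a congruence whose quotient is the two-element semilattice, and no proper refinement of it has a commutative quotient, so it is exactly the $\mcS$-replica congruence; both classes lie in $\mc{LZ}$, hence $A \in \mc{LZ} \circ \mcS$. Yet $t(a,c) = c \neq d = t(a,d)$: your transition map $\phi_{s,r}$ is genuinely not well defined, $A$ is not a P\l onka sum, and $A$ violates the regular $\mc{LZ}$-identity $x(yz) = x(zy)$ (evaluate at $a,c,d$) --- precisely the identity to which you reduced well-definedness. So the ``crux'' you flagged is not a technical obstacle to be engineered around: the identity you need is false in the product, and in particular no derivation from Mal'tsev's quasi-equational base \cite{M67} can produce it, since that base is sound for $\mcV_t \circ \mcS_\tau$ and $A$ belongs to $\mcV_t \circ \mcS_\tau$. (Those quasi-identities encode only the fact that each replica class lies in $\mcV_t$, which $A$ satisfies.) What is true, by Theorems~\ref{T:thevar} and~\ref{T:main}, is that the product is the variety axiomatized by $\Sigma^p$, and this is strictly weaker than the set of all regular identities of $\mcV_t$; the product consists of arbitrary semilattice sums of $\mcV_t$-algebras, not just P\l onka sums, so it properly contains $\wt{\mcV_t}$.

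The theorem itself is of course true, and your closing observation is exactly the paper's proof (Corollary~\ref{C:BPR}): with $f(x,y) = t(x,y)$ and $g(x,y) = t(y,x)$, the identity $f(x,y) = g(x,y)$ is regular (a strongly irregular term $t$ contains both variables), hence holds in $\mcS_\tau$, while $\mcV_t$ satisfies $f(x,y) = x$ and $g(x,y) = y$, and $\mcS_\tau$ is idempotent, so Theorem~\ref{T:main} applies. The correct write-up is therefore that one sentence, promoted from a closing footnote to the actual proof, possibly together with the one valid half of your decomposition ($\wt{\mcV_t} \subseteq \mcV_t \circ \mcS_\tau$ via P\l onka's theorem); everything that depends on the reverse inclusion must be discarded.
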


Also in \cite{BPR19}, it was shown how to derive an equational base for the product $\mcV_t \circ \mcS_{\tau}$ from equational bases for $\mcV_t$ and $\mcS_{\tau}$, and even more generally, how to derive all identities true in the product $\mcV \circ \mcS_{\tau}$, for any variety $\mcV$ of $\Om$-algebras of a plural type $\tau$.

The aim of this paper is to describe the identities true in the Mal'tsev product $\mcV \circ \mcW$ of two varieties of a type $\tau$ without symbols of nullary operations in terms of the identities satisfied by its components, and then to extend Theorem~\ref{T:BPR}.

The identities satisfied in $\mcV \circ \mcW$ are described in Section~\ref{S:Ids}. They are obtained from the identities true in $\mcV$ by
substituting for their variables $\mcW$-equivalent term idempotents of $\mcW$. In the case when the Mal'tsev product $\mcV \circ \mcW$ is a variety, one then obtains its  equational base.

Note that if $\mcU$ is a variety of type $\tau$ and $\mcV \circ \mcW$ is a variety, then $\mcV \circ_{\mcU} \mcW = (\mcV \circ \mcW) \cap \mcU$ is also a variety.
However, it is possible that $\mcV \circ_{\mcU} \mcW $ is a variety but $\mcV \circ \mcW$ is not. It was shown in \cite{BPR19}, that the Mal'tsev product $\mcS \circ \mcS$, where $\mcS$ is the variety of semilattices, is not a variety. But since $\mcS \subseteq \mcS \circ \mcS$, it follows that $\mcS \circ_{\mcS} \mcS$ is the variety $\mcS$.

Theorem~\ref{T:main} of Section~\ref{S:Main} provides a new sufficient condition for $\mcV \circ \mcW$ to be a variety.
We show that if $\mcW$ is idempotent and there are terms $f(x,y)$ and $g(x,y)$ such that $\mcV$ satisfies the identities $f(x,y) = x$ and $g(x,y) = y$, and $\mcW$ satisfies the identity $f(x,y) = g(x,y)$, then $\mcV \circ \mcW$ is a variety. Some consequences of this theorem are derived. Section~\ref{S:Bands}
discusses the case when the variety $\mcW$ is equivalent to a variety of bands.

We use notation and conventions similar to those of \cite{BPR19}, \cite{RS99, RS02}. For details and
further information concerning quasivarieties and Mal'tsev product of quasivarieties we
refer the reader to \cite{M67} and \cite{M71}, and then also to \cite{B18} and
\cite[Ch.~2]{RS02}; for universal algebra, see \cite{CB12} and \cite{RS99}, \cite{RS02}.

\section{Identities true in Mal'tsev products}\label{S:Ids}

In the whole paper we assume that $\mcV$ and $\mcW$ are varieties of the same type $\tau\colon \Omega \rightarrow \mbN$ without symbols of nullary operations.
Let $\mr{Id}(\mcV)$ and $\mr{Id}(\mcW)$ denote the sets of identities true in $\mcV$ and $\mcW$, respectively.
Members of the absolutely free $\Omega$-algebra $X\Omega$ over a countably infinite set $X$, i.e. $\Om$-terms, are denoted $x_1 \dots x_k u, x_1 \dots x_k v$, etc. or briefly $\bfx u, \bfx v$, etc. or just $u, v$, etc. Note that $\bfx u$ may not actually contain a particular variable among $x_1, \dots, x_k$. We use a similar notation for elements of an $\Omega$-algebra $A$. If $a_1, \dots, a_k$ are members of $A$, we write $\bfa u$, when $u$ is applied to elements $a_1, \dots, a_k$, even if a particular variable $x_i$ does not appear in $u$.
If the identity $u =  v$ holds in a variety $\mcW$, then the terms $u$ and $v$ will sometimes be called $\mcW$-\emph{equivalent}.

\begin{definition}\label{D:eqbasprol}
Let
\begin{equation}\label{E:s}
x_1 \dots x_k u = x_1 \dots x_k v \tag{$\sigma$}
\end{equation}
be an identity true in $\mcV$.
Define
\begin{align*}
\sigma^{p} := &\{r_1 \dots r_k u = r_1 \dots r_k v \,\mid \\
& r_1, \dots, r_k \in X\Om , \\
&\forall \, 1 \leq i, j \leq k, \, \mcW \models r_i = r_j , \\
&\forall \, \omega \in \Om, \, \mcW \models r_1 \dots r_1 \omega = r_1\}.
\end{align*}
For a subset $\Sigma$ of $\mr{Id}(\mcV)$, define
\[
\Sigma^p := \bigcup_{\sigma \in \Sigma} \sigma^p.
\]
\end{definition}

Note that the last condition of the definition of $\sigma^p$ implies that
$\mcW \models r_i \dots r_i \omega = r_i$ for all $1 \leq i\leq k$ and $\omega \in \Omega$. If $\mcW$ is an idempotent variety, then this condition is always satisfied.

In other words, we may say that the identities $\sigma^p$ are obtained from $\sigma$ by substituting for variables of $\sigma$, pairwise $\mcW$-equivalent term idempotents of $\mcW$.
However, by Lemma~\ref{L:termidem}, such terms can exist only if all $\mcW$-algebras contain idempotent elements. Otherwise the set $\Sigma^p$ is empty.

\begin{lemma}\label{L:eqbase}
Let $\Sigma$ be any set of identities true in $\mcV$. Then the Mal'tsev product $\mcV \circ \mcW$ satisfies the identities $\Sigma^p$.
\end{lemma}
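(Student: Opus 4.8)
The plan is to verify directly that every member of $\mcV \circ \mcW$ satisfies each identity in $\Sigma^p$. Fix an identity $\sigma$ of the form \eqref{E:s} in $\Sigma$ and a member $r_1 \dots r_k u = r_1 \dots r_k v$ of $\sigma^p$, where the terms $r_1, \dots, r_k \in X\Om$ satisfy the two conditions of Definition~\ref{D:eqbasprol}. Let $A \in \mcV \circ \mcW$, and let $\theta$ be a witnessing congruence, so that $A/\theta \in \mcW$ and every $\theta$-class that is a subalgebra of $A$ lies in $\mcV$. Fix an arbitrary assignment of the variables of $r_1, \dots, r_k$ to elements of $A$, and write $\bar r_i \in A$ for the resulting value of the term $r_i$. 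I must show $\bar r_1 \dots \bar r_k u = \bar r_1 \dots \bar r_k v$ in $A$.

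First I would locate the values $\bar r_1, \dots, \bar r_k$ inside a single $\theta$-class. Since term evaluation commutes with the quotient homomorphism $A \to A/\theta$, the class $\bar r_i/\theta$ is precisely the value of $r_i$ computed in $A/\theta$ under the induced assignment. Because $A/\theta \in \mcW$ and, by hypothesis, $\mcW \models r_i = r_j$ for all $i,j$, these values all coincide: $\bar r_1/\theta = \dots = \bar r_k/\theta =: C$. Thus $\bar r_1, \dots, \bar r_k$ all lie in the single $\theta$-class $C$.

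Next I would show that $C$ is a subalgebra of $A$. By the last condition of Definition~\ref{D:eqbasprol}, $\mcW \models r_1 \dots r_1 \omega = r_1$ for each $\omega \in \Om$; evaluating again in $A/\theta$ gives $\bar r_1/\theta \dots \bar r_1/\theta\, \omega = \bar r_1/\theta$, so the element $C = \bar r_1/\theta$ is idempotent in $A/\theta$. By Lemma~\ref{L:subcon}, $C$ is therefore a subalgebra of $A$; being a $\theta$-class that is a subalgebra, $C \in \mcV$ by the definition of the Mal'tsev product.

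Finally, since $\sigma$ holds throughout $\mcV$, it holds in the $\mcV$-algebra $C$. As $\bar r_1, \dots, \bar r_k \in C$, substituting them into $\sigma$ yields $\bar r_1 \dots \bar r_k u = \bar r_1 \dots \bar r_k v$ in $C$, and because $C$ is a subalgebra of $A$ the same equation holds in $A$. As the assignment, the algebra $A$, the member of $\sigma^p$, and $\sigma \in \Sigma$ were all arbitrary, this gives $\mcV \circ \mcW \models \Sigma^p$. I do not expect a serious obstacle here, as the content is entirely in the bookkeeping; the one point requiring care is the repeated use of the fact that evaluating a term in $A/\theta$ agrees with applying the quotient map to its value in $A$, which is exactly what lets the two substitution conditions of Definition~\ref{D:eqbasprol} do their work — the $\mcW$-equivalence collapsing the $\bar r_i$ into one class, and the term-idempotence promoting that class to a subalgebra via Lemma~\ref{L:subcon}.
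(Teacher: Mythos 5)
Your proof is correct and follows essentially the same route as the paper's: collapse the values of the $r_i$ into a single congruence class using the $\mcW$-equivalence condition, promote that class to a subalgebra via Lemma~\ref{L:subcon} and the term-idempotence condition, and then invoke membership of the class in $\mcV$ to evaluate $\sigma$. The only (harmless) difference is that you work with an arbitrary witnessing congruence $\theta$ from the definition of $\mcV \circ \mcW$, whereas the paper fixes the $\mcW$-replica congruence $\varrho$; your choice is if anything slightly more self-contained.
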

\begin{proof}
First note that if $\mcW$ has no term idempotents, that means there are no terms $r$ such that $\mcW$ satisfies $r \dots r \omega = r$ for all $\omega \in \Om$, then the set $\Sigma^p$ is empty, and $\mcV \circ\mcW$ satisfies $\Sigma^p$ vacuously.

Now we assume that there exist such terms.
Let $A$ be in $\mcV \circ \mcW$ with the $\mcW$-replica congruence $\varrho$. Let $x_1 \dots x_k u = x_1 \dots x_k v$ be an identity of $\Sigma$, and let $y_1 \dots y_n r_1, \, \dots, \, y_1 \dots y_n r_k$
be pairwise $\mcW$-equivalent term idempotents of $\mcW$.
Since $A/\varrho \in \mcW$, it follows that for any $\bfa \in A^n$, we have
\begin{equation}\label{E:eqroclass}
(\bfa r_i)/\varrho = (\bfa r_j)/\varrho
\end{equation}
for every $1 \leq i < j \leq k$.
Thus all of the elements $\bfa r_i$, for $i = 1, \dots ,k$, lie in the same congruence class of $\varrho$.

Since all $r_i$ are term idempotents of $\mcW$, it follows that
\[
(\bfa r_{1})/\varrho \dots (\bfa r_{1})/\varrho \,\, \omega =(\bfa r_1)/\varrho
\]
for each $\omega \in \Om$, that means $(\bfa r_1)/\varrho$ is an idempotent of $A/\varrho$. Then by Lemma~\ref{L:subcon}, $(\bfa r_1)/\varrho$
is a subalgebra of $A$.
Consequently, since $A \in \mcV \circ \mcW$, we have $(\bfa r_1)/\varrho \in \mcV$.
Since $\bfa r_1, \dots, \bfa r_k \in (\bfa r_1)/\varrho$, it follows that
$\bfa r_1 \dots \bfa r_k u = \bfa r_1 \dots \bfa r_k v$. Thus the identity
\[
r_1 \dots r_k u = r_1 \dots r_k v
\]
is satisfied in $A$.
\end{proof}

Let us note that Lemma~\ref{L:eqbase} holds for any equational base $\Sigma$ of $\mcV$. In particular, it holds for $\Sigma = \mr{Id}(\mcV)$.

\begin{corollary}\label{C:VWU}
For any equational base $\Sigma$ of $\mcV$, the Mal'tsev product $\mcV \circ \mcW$ is contained in the variety $\mcU$ defined by $\Sigma^p$:
\begin{equation}
\mcV \circ \mcW \subseteq \mcU.
\end{equation}
\end{corollary}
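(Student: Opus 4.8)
The plan is to read this off directly from Lemma~\ref{L:eqbase}. By definition, the variety $\mcU$ is the class of all $\Om$-algebras satisfying every identity in $\Sigma^p$, that is, $\mcU = \mr{Mod}(\Sigma^p)$. So to prove $\mcV \circ \mcW \subseteq \mcU$, it suffices to verify that each algebra $A \in \mcV \circ \mcW$ satisfies all of the identities in $\Sigma^p$.

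But this is exactly the assertion of Lemma~\ref{L:eqbase}, applied to the given equational base $\Sigma$ of $\mcV$: the whole product $\mcV \circ \mcW$ satisfies $\Sigma^p$. First I would invoke that lemma to conclude that any $A \in \mcV \circ \mcW$ models every member of $\Sigma^p$; then membership $A \in \mcU$ is immediate from the definition of $\mcU$ as the model class of $\Sigma^p$. Since $A$ was arbitrary, this gives $\mcV \circ \mcW \subseteq \mcU$.

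There is essentially no obstacle here: the substantive work lives in Lemma~\ref{L:eqbase}, and the corollary is just the translation of ``$\mcV \circ \mcW$ satisfies the identities $\Sigma^p$'' into ``$\mcV \circ \mcW$ is contained in the variety defined by $\Sigma^p$''. The one point worth flagging is the degenerate case in which $\mcW$ has no term idempotents, so that $\Sigma^p = \emptyset$; then $\mcU = \mcT$ is the variety of all $\Om$-algebras and the inclusion holds trivially, in agreement with the vacuous satisfaction already noted at the start of the proof of Lemma~\ref{L:eqbase}.
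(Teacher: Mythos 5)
Your proof is correct and matches the paper's treatment: the corollary is stated there as an immediate consequence of Lemma~\ref{L:eqbase}, exactly as you argue, with membership in $\mcU$ following from the definition of $\mcU$ as the model class of $\Sigma^p$. Your remark about the degenerate case $\Sigma^p = \emptyset$ is consistent with the paper as well and requires no change.
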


The following simple examples illustrate how the set $\Sigma^p$ is built from the set $\Sigma$.

\begin{example}\label{Ex:lzsl}
Let $\mcV$ be the variety $\mc{LZ}$ of left-zero semigroups defined by the identity $x \cdot y = x$, and let $\mcW$ be the variety $\mcS$ of semilattices. It is known that $r = s$ is an identity true in $\mcS$ precisely if the terms $r$ and $s$ have the same sets of variables.
As the set $\Sigma$ we take the one-element set $\{\sigma\}$, where $\sigma$ is the single identity defining $\mc{LZ}$. Since semilattices are idempotent, the set $\Sigma^p$ consists of all identities of the form
\[
x_1 \dots x_m\, r \cdot x_1 \dots x_m\, s = x_1 \dots x_m\, r,
\]
where both $r$ and $s$ contain precisely the same variables $x_1, \dots, x_m$ and $m = 1, 2, \dots$. Note that $\Sigma^p$ contains the idempotent law $x \cdot x = x$, but neither the commutative law nor the associative law are contained in $\Sigma^p$. By Lemma~\ref{L:eqbase}, the Mal'tsev product $\mc{LZ} \circ \mcS$ satisfies the identities $\Sigma^p$. It follows by results of \cite{BPR19} that in fact these identities define $\mc{LZ} \circ \mcS$.
\end{example}

\begin{example}\label{Ex:lzcsgr}
Let $\mcV$ be the variety $\mc{LZ}$ and let $\mcW$ be the variety $\mc{CS}$ of constant semigroups defined by the identity $x \cdot y = z \cdot t$. It is easy to see that $r = s$ is a (non-trivial) identity true in $\mc{CS}$ precisely if none of the terms $r$ and $s$ is a variable, or in different words, if each is the product of two groupoid terms, i.e.
\[
r = r_1 \cdot r_2 \,\, \mbox{and} \,\, s = s_1 \cdot s_2.
\]
As the set $\Sigma$ we take the one-element set $\{\sigma\}$, where $\sigma$ is the identity defining $\mc{LZ}$. To describe $\Sigma^p$, note that
for $r = r_1 \cdot r_2$,
\[
r \cdot r = r_1 \cdot r_2 = r.
\]
Consequently the last condition of the definition of $\sigma^p$ is satisfied. It follows that the set $\Sigma^p$ consists of the identities of the form
\[
(r_1 \cdot r_2) \cdot (s_1 \cdot s_2) = r_1 \cdot r_2,
\]
where $r_1, r_2, s_1, s_2$ are any groupoid terms. If we define $c := x \cdot x$, then $\Sigma^p$ may be written as follows.
\[
\Sigma^p = \{r \cdot s = r \mid \mc{CS} \models r = s = c\}.
\]
Note that $\Sigma^p$ contains neither the idempotent nor the associative, nor the commutative law. By Lemma~\ref{L:eqbase}, the Mal'tsev product $\mc{LZ} \circ \mc{CS}$ satisfies the identities $\Sigma^p$.
\end{example}

\begin{example}\label{Ex:lzgr}
In this example $\mcV$ will be again the variety $\mc{LZ}$ with $\Sigma$ and $\sigma$ defined as in the previous examples. And $\mcW$ will be the variety $\mc{GP}$ of groups, considered as algebras $(G, \cdot, ^{-1})$ with one binary and one unary operation, where the group identity $e$ is defined as $x \cdot x^{-1}$. (See e.g. \cite{BPR19}.) Left-zero bands are considered as algebras of the same type, with $x^{-1} := x$. Let $r$ and $s$ be groupoid terms. Then $r = s$ is an identity satisfied in $\mc{GP}$ if $r$ and $s$ have the same reduced (or canonical) form. (See e.g. \cite{H74}.) Recall that each group has only one idempotent, namely the identity $e$. Hence $\sigma^p$ consists of the identities of the form
\[
r \cdot s = r,
\]
where $r = s$ are the identities satisfied in $\mc{GP}$ such that $r \cdot r = r$ and $r^{-1} = r$. However, the last two conditions are satisfied precisely if $r = e$. It follows that
\[
\Sigma^p = \{r \cdot s = r \mid \mc{GP} \models r = s = e \}.
\]
By Lemma~\ref{L:eqbase}, the Mal'tsev product $\mc{LZ} \circ \mc{GP}$ satisfies the identities $\Sigma^p$.
\end{example}

In the next example, algebras in $\mcW$ have more than one idempotent.

\begin{example}
In this example $\mcV$, $\sigma$ and $\Sigma$ are defined as in Example~\ref{Ex:lzgr}. The variety $\mcW$ is the regularisation $\wt{\mc{GP}}$ of the variety $\mc{GP}$ of groups, that means it is defined by the regular identities true in $\mc{GP}$. (See e.g. \cite[ch.~~4]{RS02} and \cite{BPR19}.) This class consists precisely of the P\l onka sums of groups (in other words: strong semilattices of groups or Clifford semigroups), and forms a subvariety of the variety of inverse semigroups. (We refer the reader to \cite{PR92} and \cite[Ch.~4]{RS02} for necessary properties of P\l onka sums and to \cite{L98} for properties of inverse semigroups.) It is clear that $\wt{\mc{GP}}$ is contained in $\mc{GP} \circ \mcS$. Let $A = \sum_{i \in I}A_i$ be the P\l onka sum of groups $A_i$. Then each $A_i$ contains one idempotent, the identity $e_i$ of $A_i$. We will show that two elements $a, b \in A$ belong to one summand of $A$ precisely if $a a^{-1} = b b^{-1}$ and $a^{-1} a = b^{-1} b$. Note however that in each Clifford semigroup $x^{-1} x = x x^{-1}$. This product is the identity of the summand.
To show that $a^{-1} a = b^{-1} b$ for elements $a$ and $b$ in one summand of $A$, recall that $a, b \in A_i$ for some $i \in I$ precisely if $ab^{-1}b = a$ and $ba^{-1} a = b$. If these conditions hold, then
$a^{-1} a = a^{-1} a b^{-1}b = b^{-1} b a^{-1}a = b^{-1}b$, since Clifford semigroups satisfy the identity $x^{-1}x y^{-1}y = y^{-1}y x^{-1}x$. On the other hand, if $a^{-1} a = b^{-1} b$, then $ab^{-1}b = aa^{-1}a = a$ and $ba^{-1}a = bb^{-1}b = b$, since Clifford semigroups satisfy $xx^{-1}x = x$. Hence $a$ and $b$ belong to the same summand.

The elements of the form $a a^{-1}$ form all idempotents of the semigroup $A$. Note also the following identities true in all Clifford semigroups
\[
(x x^{-1}) \cdot (x x^{-1}) = x x^{-1} \,\, \mbox{and} \,\, (x x^{-1})^{-1} = x x^{-1}.
\]
It follows that $\sigma^p$ consists of the identities of the form
\[
r \cdot s = r,
\]
where $r = s$ are identities satisfied in $\wt{\mc{GP}}$ such that $r \cdot r = r$ and $r^{-1} = r$. Identities satisfied in $\wt{\mc{GP}}$ are described by P\l onka's Theorem (see \cite[Ch.~~4]{RS02} and \cite{BPR19}), whereas the last two conditions hold precisely if $r = r r^{-1} = s s^{-1} = s$. Hence
\[
\Sigma^p = \{r \cdot s = r \mid \wt{\mc{GP}} \models r = r r^{-1} = s s^{-1} = s \}.
\]
By Lemma~\ref{L:eqbase}, the Mal'tsev product $\mc{LZ} \circ \wt{\mc{GP}}$ satisfies the identities $\Sigma^p$.
\end{example}

\begin{example}
Consider the variety $\mc{MU}$ of monounary algebras $(A,u)$. Recall that all terms of monounary type have the form $u^m (x)$ for $m \in \mbN$. There are two types of proper subvarieties of $\mc{MU}$: the varieties $\mcU_k$, for $k \in \mbN$, each defined by the identity $u^{k}(x) = u^{k}(y)$, and the varieties $\mcU_{n,k}$, each defined by the identity $u^{n+k}(x) = u^{k}(x)$, for $k \in \mbN$ and $n \in \mbZ^{+}$. (See e.g. \cite[S.~9.1]{SP09}.) Note that an idempotent unary operation of an algebra $A$ is just the identity mapping on $A$. The only idempotent subvariety of $\mc{MU}$ is the variety $\mcU_{1,0}$ defined by the identity $u(x) = x$. A term idempotent $u^{m}(x)$ of a subvariety of $\mc{MU}$ must satisfy $u^{1+m}(x) = u^m(x)$. The only subvarieties of $\mc{MU}$ having term idempotents are the varieties $\mcU_k$ and $\mcU_{1,k}$ for $k \in \mbN$. Term idempotents have the form $u^{k+l}(x)$ for $k, l \in \mbN$. (See \cite{SP09} again.)
Now let $v = w$ be the unique identity defining a proper subvariety $\mcV$ of $\mc{MU}$. As the set $\Sigma$ we take the one element set consisting of this identity. Let $\mcW$ be any of the varieties $\mcU_k$ or $\mcU_{1,k}$. Then the identities of $\Sigma^p$ are obtained from $v = w$ by substituting for variables in $v = w$, $\mcW$-equivalent term idempotents of $\mcW$.
Finally note that the varieties $\mcU_{n,k}$ for $n > 1$ have no term idempotents, so algebras of these varieties have no idempotent elements, hence the set $\Sigma^p$ is empty. In this case the variety $\mf{H}(\mcV \circ \mcU_{n,k})$ generated by the Mal'tsev product $\mcV \circ \mcU_{n,k}$ coincides with $\mc{MU}$.
\end{example}

If $\mcU$ is the variety defined by $\Sigma^p$ for any equational base $\Sigma$ of $\mcV$, then
\[
\mcW \subseteq \mcV \circ \mcW \subseteq \mcU.
\]
By basic properties of free algebras in quasivarieties (see  e.g. \cite[\S3.3]{RS02}) it is known that, if $\varrho$ is the $\mcW$-replica congruence of the free $\mcU$-algebra $X \mcU$ over a set $X$, then
\[
X \mcU/ \varrho \, \cong \, X \mcW,
\]
where $X \mcW$ is the free $\mcW$-algebra over $X$.

\begin{proposition}\label{P:free}
Let $\mcU$ be the variety defined by $\Sigma^p$, where $\Sigma$ is any equational base of $\mcV$. Let $X \mcU$ be the free $\mcU$-algebra over a set $X$. Then
\[
X \mcU \in \mcV \circ \mcW.
\]
\end{proposition}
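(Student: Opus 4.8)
The plan is to verify directly that $X\mcU$, equipped with its $\mcW$-replica congruence $\varrho$, satisfies the two defining conditions for membership in $\mcV \circ \mcW$. The first condition, $X\mcU/\varrho \in \mcW$, is immediate from the quoted fact that $X\mcU/\varrho \cong X\mcW$. The real work lies in the second condition: every $\varrho$-class of $X\mcU$ that is a subalgebra of $X\mcU$ must belong to $\mcV$.

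First I would fix a subalgebra congruence class. By Lemma~\ref{L:subcon}, a class $a/\varrho$ is a subalgebra of $X\mcU$ precisely when it is an idempotent element of $X\mcU/\varrho \cong X\mcW$. Transporting this idempotency back through the isomorphism, and recalling that elements of $X\mcW$ are $\mcW$-classes of $\Om$-terms over $X$, I would observe that such an $a$ is represented by a term idempotent of $\mcW$, and that any $b_i \in a/\varrho$ is represented by an $\Om$-term $p_i$ over $X$ with $\mcW \models p_i = a$. Hence the $p_i$ are pairwise $\mcW$-equivalent term idempotents of $\mcW$.

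The decisive step is then to recognize that the tuple $(p_1, \dots, p_k)$ satisfies exactly the three conditions imposed on $(r_1, \dots, r_k)$ in Definition~\ref{D:eqbasprol}. Consequently, for every identity $\sigma \colon x_1 \dots x_k u = x_1 \dots x_k v$ in the base $\Sigma$, the substituted identity $p_1 \dots p_k u = p_1 \dots p_k v$ belongs to $\sigma^p \subseteq \Sigma^p$. Since $X\mcU$ is the free $\mcU$-algebra and $\mcU$ is defined by $\Sigma^p$, this identity holds in $X\mcU$; evaluating at the represented elements gives $b_1 \dots b_k u = b_1 \dots b_k v$ inside the subalgebra $a/\varrho$. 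As $b_1, \dots, b_k$ were arbitrary elements of $a/\varrho$ and $\sigma$ was an arbitrary member of $\Sigma$, the subalgebra $a/\varrho$ satisfies the equational base $\Sigma$ of $\mcV$, so $a/\varrho \in \mcV$. Together with $X\mcU/\varrho \in \mcW$, this yields $X\mcU \in \mcV \circ \mcW$.

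I expect the main obstacle to be the careful bookkeeping at the identification step: one must justify that a representing term $p_i$ of an element of an idempotent $\varrho$-class is genuinely a term idempotent of $\mcW$ (this uses the isomorphism $X\mcU/\varrho \cong X\mcW$ together with the characterization of idempotents of $X\mcW$ as $\mcW$-classes of term idempotents), and that $\mcW$-equivalence of these representatives is exactly $\varrho$-relatedness in $X\mcU$. Once this dictionary between ``elements of an idempotent congruence class'' and ``pairwise $\mcW$-equivalent term idempotents'' is in place, membership of the substituted identities in $\Sigma^p$ is a direct match with Definition~\ref{D:eqbasprol}, and the rest is routine.
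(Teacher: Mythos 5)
Your proposal is correct and follows essentially the same route as the paper's own proof: both identify the elements of a subalgebra $\varrho$-class of $X\mcU$ with representative $\Om$-terms, observe that these representatives are pairwise $\mcW$-equivalent term idempotents (the paper derives this directly from $r_i/\varrho = r_j/\varrho$, freeness of $X\mcU/\varrho \cong X\mcW$, and closure of the class under the operations, while you route it through Lemma~\ref{L:subcon} and the characterization of idempotents of $X\mcW$), and then conclude that the substituted identities lie in $\Sigma^p$ and hence hold in the free $\mcU$-algebra, forcing the class into $\mcV$. The differences are purely presentational.
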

\begin{proof}
We already know that the $\mcW$-replica $\varrho$ of $F := X \mcU$ is isomorphic to $X \mcW$. Let $a \in F$ be such that the congruence class $B := a/\varrho$ is a subalgebra of $F$. We will show that $B$ belongs to $\mcV$.

Let $\sigma$ be an identity
$x_1 \dots x_k u = x_1 \dots x_k v$ from $\Sigma$
and let $r_1, \dots, r_k \in B$. Since $B$ is a single $\varrho$-class, it follows that $r_i/\varrho = r_j/\varrho$. As $F/\varrho$ is free in $\mcW$, we get that $\mcW$ satisfies the identities $r_i = r_j$.  Similarly, since $B$ is a subalgebra of $F$, we conclude that
$(r_1 \dots r_1 \omega)/\varrho = r_1/\varrho$ for each $\omega \in \Om$, whence $\mcW$ satisfies the identities $r_1 \dots r_1 \omega = r_1$. Thus the identity $r_1 \dots r_k u = r_1 \dots r_k v$ belongs to $\sigma^p \subseteq \Sigma^p$. So it is satisfied in $\mcU$. In particular in the free $\mcU$-algebra $F$ we have the equality
\[
r_1 \dots r_k u = r_1 \dots r_k v.
\]
This shows that $B$ satisfies the identities of $\Sigma$, so it belongs to $\mcV$.
\end{proof}

\begin{theorem}\label{T:thevar}
Let $\mcV$ and $\mcW$ be varieties of the same similarity type and let $\Sigma$ be an equational base for $\mcV$. The variety $\mf{H}(\mcV \circ \mcW)$ generated by the Mal'tsev product $\mcV \circ \mcW$ is defined by the identities $\Sigma^p$.
\end{theorem}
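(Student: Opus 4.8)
The plan is to prove the two inclusions $\mf{H}(\mcV \circ \mcW) \subseteq \mcU$ and $\mcU \subseteq \mf{H}(\mcV \circ \mcW)$, where throughout $\mcU$ denotes the variety defined by the identities $\Sigma^p$. Since both classes are varieties of the same type, establishing these set-theoretic inclusions will give the desired equality. The essential mathematical content has already been isolated in the preceding results, so the argument here is largely a formal assembly of Corollary~\ref{C:VWU} and Proposition~\ref{P:free} together with the fact that a variety is determined by its free algebras.

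For the inclusion $\mf{H}(\mcV \circ \mcW) \subseteq \mcU$ I would invoke Corollary~\ref{C:VWU} directly. That corollary gives $\mcV \circ \mcW \subseteq \mcU$, and since $\mcU$ is a variety (hence closed under $\mf{H}$, $\mf{S}$, and $\mf{P}$), the variety generated by $\mcV \circ \mcW$ must already be contained in $\mcU$. Equivalently, by Lemma~\ref{L:eqbase} every identity of $\Sigma^p$ holds throughout $\mcV \circ \mcW$, hence throughout the variety it generates, so $\mf{H}(\mcV \circ \mcW)$ satisfies $\Sigma^p$ and is therefore a subclass of $\mcU$.

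The reverse inclusion is where Proposition~\ref{P:free} does the work. For an arbitrary set $X$, that proposition tells us $X\mcU \in \mcV \circ \mcW$, and hence $X\mcU \in \mf{H}(\mcV \circ \mcW)$. Now every algebra of $\mcU$ is a homomorphic image of a free $\mcU$-algebra $X\mcU$ on a suitably large generating set $X$; since $\mf{H}(\mcV \circ \mcW)$ is a variety, it is closed under homomorphic images, so each such quotient again lies in $\mf{H}(\mcV \circ \mcW)$. This yields $\mcU \subseteq \mf{H}(\mcV \circ \mcW)$ and completes the proof. I do not expect a genuine obstacle in this theorem itself: the only delicate point is that the free $\mcU$-algebras land in the Mal'tsev product, and that is exactly what Proposition~\ref{P:free} guarantees. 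The care required lies upstream, in verifying (as in Proposition~\ref{P:free}) that every $\varrho$-class of $X\mcU$ which happens to be a subalgebra actually satisfies the identities $\Sigma$; once that is in hand, the present statement follows by the standard ``free algebras generate the variety'' principle.
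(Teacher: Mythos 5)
Your proof is correct and takes essentially the same approach as the paper: one inclusion via Corollary~\ref{C:VWU}, the other by using Proposition~\ref{P:free} to place the free $\mcU$-algebras inside $\mcV \circ \mcW \subseteq \mf{H}(\mcV \circ \mcW)$, concluding by the standard free-algebra principle. Your final step merely unpacks what the paper states compactly --- that the two varieties share the same free algebras and are therefore equal --- into the explicit observation that every $\mcU$-algebra is a homomorphic image of some $X\mcU$ and that $\mf{H}(\mcV \circ \mcW)$ is closed under homomorphic images.
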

\begin{proof}
Let $\mcU$ be the variety defined by $\Sigma^p$. By Corollary \ref{C:VWU}, we know that $\mf{H}(\mcV \circ \mcW) \subseteq \mcU$, and by Proposition~\ref{P:free}, that $X \mcU \in \mcV \circ \mcW \subseteq \mf{H}(\mcV \circ \mcW)$.
Thus the varieties $\mf{H}(\mcV \circ \mcW)$ and $\mcU$ share the same free algebras. Since free algebras are uniquely defined and determine the variety, it follows that $\mf{H}(\mcV \circ \mcW) = \mcU$.
\end{proof}

\begin{corollary}
If not all $\mcW$-algebras contain idempotent elements, then $\mf{H}(\mcV \circ \mcW)$ does not depend on $\mcV$, and
coincides with the variety $\mcT$ of all $\Om$-algebras.
\end{corollary}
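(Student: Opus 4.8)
The plan is to show that the stated hypothesis forces the auxiliary set $\Sigma^p$ to be empty, after which Theorem~\ref{T:thevar} delivers the conclusion immediately. First I would invoke Lemma~\ref{L:termidem}: since $\tau$ has no nullary operation symbols and, by assumption, not every algebra in $\mcW$ possesses an idempotent element, the variety $\mcW$ can have no term idempotent. Concretely, there is no $\Om$-term $r$ for which $\mcW \models r \dots r\, \omega = r$ holds for every $\omega \in \Om$.

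Next I would examine Definition~\ref{D:eqbasprol} in the light of this. The last clause in the definition of $\sigma^p$ demands that the substituted terms $r_1, \dots, r_k$ be term idempotents of $\mcW$, i.e.\ that $\mcW \models r_1 \dots r_1\, \omega = r_1$ for all $\omega \in \Om$. By the previous paragraph no such term exists, so this clause is never satisfiable, and hence $\sigma^p = \emptyset$ for every identity $\sigma$ true in $\mcV$. Consequently $\Sigma^p = \bigcup_{\sigma \in \Sigma} \sigma^p = \emptyset$ for any equational base $\Sigma$ of $\mcV$. The key point is that this emptiness uses nothing about $\mcV$ beyond $\Sigma \subseteq \mr{Id}(\mcV)$, so it holds uniformly, regardless of which $\mcV$ one takes.

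Finally I would apply Theorem~\ref{T:thevar}, which identifies $\mf{H}(\mcV \circ \mcW)$ with the variety defined by $\Sigma^p$. Since $\Sigma^p = \emptyset$, the defining set of identities is empty, and the variety it axiomatises is the full variety $\mcT$ of all $\Om$-algebras. As the argument never referred to the particular choice of $\mcV$, we obtain $\mf{H}(\mcV \circ \mcW) = \mcT$ for every $\mcV$, which is precisely the asserted independence.

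I do not anticipate a genuine obstacle here, as the result is essentially a direct corollary of Theorem~\ref{T:thevar} together with the observation already recorded after Definition~\ref{D:eqbasprol}. The only steps requiring a little care are the clean passage from ``$\mcW$ has no term idempotent'' (via Lemma~\ref{L:termidem}) to ``the final clause of $\sigma^p$ is unsatisfiable,'' and the routine remark that a variety axiomatised by the empty set of identities is exactly $\mcT$.
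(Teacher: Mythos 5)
Your proof is correct and follows exactly the route the paper intends: the corollary is an immediate consequence of Lemma~\ref{L:termidem} (no idempotents in some $\mcW$-algebra forces $\mcW$ to have no term idempotents), the remark after Definition~\ref{D:eqbasprol} that $\Sigma^p$ is then empty, and Theorem~\ref{T:thevar}, which identifies $\mf{H}(\mcV \circ \mcW)$ with the variety defined by $\Sigma^p = \emptyset$, namely $\mcT$. Nothing is missing, and your observation that the argument never uses the particular choice of $\mcV$ is precisely why the product's generated variety is independent of $\mcV$.
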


\begin{corollary}
If $\Sigma$ is an equational base for $\mcV$ and the Mal'tsev product $\mcV \circ \mcW$ is a variety, then $\Sigma^p$ is an equational base for $\mcV \circ \mcW$.
\end{corollary}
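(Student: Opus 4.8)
The plan is to deduce this immediately from Theorem~\ref{T:thevar}, with essentially no extra work. The central observation is that the operator $\mf{H}$ assigns to any class of $\Om$-algebras the smallest variety containing it, so a class $\mc{C}$ is a variety if and only if $\mc{C} = \mf{H}(\mc{C})$.

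First I would use the hypothesis that $\mcV \circ \mcW$ is a variety, which by the preceding remark yields $\mcV \circ \mcW = \mf{H}(\mcV \circ \mcW)$. Then I would apply Theorem~\ref{T:thevar}, which asserts that for any equational base $\Sigma$ of $\mcV$ the variety $\mf{H}(\mcV \circ \mcW)$ is defined by $\Sigma^p$. Chaining these two equalities identifies $\mcV \circ \mcW$ with the variety axiomatized by $\Sigma^p$, which is exactly the claim that $\Sigma^p$ is an equational base for $\mcV \circ \mcW$.

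There is no genuine obstacle here, since the substantive content has already been established in Theorem~\ref{T:thevar}; the only additional ingredient is the elementary fact that a variety coincides with the variety it generates. It is nonetheless worth recording why the hypothesis is needed: Lemma~\ref{L:eqbase} and Corollary~\ref{C:VWU} always give the inclusion $\mcV \circ \mcW \subseteq \mcU$, where $\mcU$ is the variety defined by $\Sigma^p$, but without knowing that $\mcV \circ \mcW$ is closed under the relevant class operators (i.e.\ is a variety) one cannot in general reverse this to $\mcU \subseteq \mcV \circ \mcW$. Assuming that $\mcV \circ \mcW$ is a variety is precisely what upgrades the one-sided containment of Corollary~\ref{C:VWU} to the equality required for $\Sigma^p$ to serve as an equational base.
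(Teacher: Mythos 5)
Your proof is correct and matches the paper's (implicit) argument: the corollary follows immediately from Theorem~\ref{T:thevar} once one notes that a variety equals the variety it generates, so $\mcV \circ \mcW = \mf{H}(\mcV \circ \mcW)$ is defined by $\Sigma^p$. The paper treats this as an immediate consequence and offers no separate proof, so there is nothing to add.
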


Let us recall that if $\mcW$ is an idempotent variety, then all blocks of congruences of algebras in $\mcV \circ \mcW$ are subalgebras, and the last condition in the definition of $\sigma^{p}$ is redundant.
Note, as well, that if $\mcW$ is idempotent, then each algebra $A$ in $\mcV \circ \mcW$ decomposes as a disjoint union of subalgebras belonging to $\mcV$  over the $\mcW$-replica of $A$. Following the convention adopted in \cite{BPR19} we may call such algebras \emph{$\mcW$-sums of $\mcV$-algebras}.

For the special case of $\mcW = \mcS_{\tau}$, where $\mcS_{\tau}$ is the variety equivalent to the variety $\mcS$ of semilattices,
one obtains Corollary 3.3 of \cite{BPR19}. Recall that two terms are $\mcS_{\tau}$-equivalent if and only if they contain exactly the same variables.

The following example, provided by C. Bergman, shows that, in general, one cannot replace the set $\mr{Id}(\mcW)$ in the definition of the identities true in $\mcV \circ \mcW$, by an equational base for $\mcW$.

\begin{example}
Consider the case where $\mcV = \mcW = \mc{LZ}$, the variety of left-zero bands.
It is defined by the unique identity $x \cdot y = x$. Every identity of $\mc{LZ}$ is of the form $u = v$, where $u$ and $v$ are groupoid words with the same first variable. By Theorem~\ref{T:thevar}, an equational base for
$\mf{H}(\mc{LZ} \circ \mc{LZ})$
consists of all identities of the form $u \cdot v = u$ again with $u$ and $v$ having the same first variable. This is the set $\Sigma^p$ of Definition \ref{D:eqbasprol}, if we take for $\Sigma$ the unique identity $x \cdot y = x$ playing the role of $\sigma$. However, if in the definition of $\sigma^{p}$, we replace $\mr{Id}(\mc{LZ})$ by the unique identity defining $\mc{LZ}$, then we will only get the identities:
\begin{align*}
&x \cdot x = x,\\
&(x \cdot y) \cdot x = x \cdot y,\\
&x \cdot (x \cdot y) = x,\\
&(x \cdot y) \cdot (x \cdot y) = x \cdot y.
\end{align*}
They are not sufficient to derive all the identities of $\mf{H}(\mc{LZ} \circ \mc{LZ})$.
\end{example}

\section{A sufficient condition for $\mcV \circ \mcW$ to be a variety}\label{S:Main}

Each algebra $A$ of the same similarity type as algebras of a variety $\mcW$ has a replica $A/\varrho$ in $\mcW$. The replica congruence $\varrho$ is the intersection of all congruences $\theta$ of $A$ with the quotient $A/\theta$ in $\mcW$. Below we provide another description useful in the proof of the main theorem of this section.

First, we define a binary relation $\varrho^0$ on $A$ as follows:
\begin{align*}
&(a, b) \in \varrho^0 \, \mbox{if and only if there are}\\ &\mbox{an identity} \, x_1\dots x_n p = x_1 \dots x_n q  \mbox{ true in } \mcW \, \mbox{and}\\ &d_1,\dots, d_n \in A  \mbox{ such that }  a = d_1 \dots d_n p,\, b = d_1\dots d_n q.
\end{align*}

\begin{lemma}\label{L:transclos}
Let $\alpha$ be a reflexive and symmetric binary relation on an algebra $A$ preserving the operations of $A$. Then the transitive closure $\tr(\alpha)$ of $\alpha$ is a congruence relation.
\end{lemma}
\begin{proof}
The relation $\tr(\alpha)$ is obviously an equivalence relation. So it remains to show that it preserves operations. Let $\omega \in \Omega$ be an $n$-ary operation and let $a_1, \dots, a_n, b_1, \dots, b_n \in A$ be such that $(a_i,b_i) \in  \tr(\alpha)$ for $1 \leq i \leq n$. Then for each $i$ there are elements
$d_{i}^1, \dots, d_{i}^{k_i}$, such that
\[
a_i\ =\ d_{i}^{1} \ \alpha \ d_{i}^{2} \ \alpha \ \dots \ \alpha \ d_{i}^{k_{i - 1}} \ \alpha \ d_{i}^{k_i} \ = \ b_i.
\]
Using reflexivity of $\alpha$ we can extend the sequences $d_{i}^{1}, \dots, d_{i}^{k_{i}}$ to sequences of the same length, say $k$, so that
\[
a_i\ =\ d_{i}^{1} \ \alpha \ d_{i}^{2} \ \alpha \ \dots \ \alpha  \ d_{i}^{k} \ = \ b_i.
\]
Since $\alpha$ preserves operations, we have
\[
a_1 \dots a_n \omega \ \alpha \ \dots \ \alpha \ d_{j}^{1} \dots d_{j}^{n} \omega \ \alpha \ d_{j+1}^{1} \dots d_{j+1}^{n} \omega \ \alpha \ \dots \ \alpha \ b_1 \dots b_n \omega.
\]
Thus $(a_1 \dots a_n \omega, b_1 \dots b_n \omega) \in \tr(\alpha)$.
\end{proof}

\begin{proposition}\label{L:Vreplicaform}
Let $\mcW$ be a variety and $A$ be an algebra of the same similarity type as algebras in $\mcW$. The $\mcW$-replica congruence $\varrho$ of $A$ coincides with the transitive closure $\tr(\varrho^0)$ of $\varrho^0$.
\end{proposition}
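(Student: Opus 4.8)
The plan is to show that $\tr(\varrho^0)$ is a congruence whose quotient lies in $\mcW$, and then to squeeze it between $\varrho^0$ and $\varrho$; since $\varrho$ is by definition the least congruence whose quotient belongs to $\mcW$, this forces $\tr(\varrho^0) = \varrho$.

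First I would check the hypotheses of Lemma~\ref{L:transclos} for $\varrho^0$. Reflexivity is witnessed by the trivial identity $x_1 = x_1$ of $\mcW$, and symmetry follows because $\mcW \models p = q$ gives $\mcW \models q = p$. The one step requiring care is that $\varrho^0$ preserves the operations. Suppose $\omega \in \Om$ is $m$-ary and $(a_i, b_i) \in \varrho^0$ for $1 \le i \le m$, say $a_i = d_{i,1} \dots d_{i,n_i} p_i$ and $b_i = d_{i,1} \dots d_{i,n_i} q_i$ for identities $p_i = q_i$ true in $\mcW$. Renaming the variables of the $m$ witnessing identities into pairwise disjoint blocks, I form the single identity $p_1 \dots p_m \omega = q_1 \dots q_m \omega$; since $\mcW$ is a variety, its identities form a fully invariant congruence on the term algebra and are in particular closed under application of $\omega$, so this identity again holds in $\mcW$. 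Substituting the concatenated tuple of all the $d_{i,j}$ recovers $a_1 \dots a_m \omega$ on the left and $b_1 \dots b_m \omega$ on the right, whence $(a_1 \dots a_m \omega, b_1 \dots b_m \omega) \in \varrho^0$. Thus $\varrho^0$ preserves operations, and Lemma~\ref{L:transclos} makes $\tr(\varrho^0)$ a congruence.

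Next I would show $A/\tr(\varrho^0) \in \mcW$. Let $x_1 \dots x_n p = x_1 \dots x_n q$ be any identity of $\mcW$ and let $d_1, \dots, d_n \in A$. By definition $(d_1 \dots d_n p,\, d_1 \dots d_n q) \in \varrho^0 \subseteq \tr(\varrho^0)$, so the two sides coincide in $A/\tr(\varrho^0)$. As the substitution was arbitrary, $A/\tr(\varrho^0)$ satisfies $p = q$, and hence every identity of $\mcW$. Therefore $\tr(\varrho^0)$ is a congruence with quotient in $\mcW$, and minimality of the replica congruence yields $\varrho \subseteq \tr(\varrho^0)$.

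For the reverse inclusion it is enough to show $\varrho^0 \subseteq \varrho$, because $\varrho$ is transitive and so contains the transitive closure of any of its subsets. If $(a,b) \in \varrho^0$ with $a = d_1 \dots d_n p$, $b = d_1 \dots d_n q$ for an identity $p = q$ of $\mcW$, then this identity holds in $A/\varrho \in \mcW$, so $a/\varrho = b/\varrho$, that is $(a,b) \in \varrho$. Hence $\tr(\varrho^0) \subseteq \varrho$, and combined with the previous inclusion this gives $\varrho = \tr(\varrho^0)$. The only genuinely delicate point is the operation-preservation of $\varrho^0$, where the separate witnessing identities must be amalgamated into a single identity of $\mcW$ by disjoint renaming of their variables before applying $\omega$.
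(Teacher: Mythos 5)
Your proposal is correct and follows essentially the same route as the paper's proof: both verify that $\varrho^0$ preserves operations by amalgamating the witnessing identities (with disjoint variables) under $\omega$, invoke Lemma~\ref{L:transclos} to get that $\tr(\varrho^0)$ is a congruence, check that $A/\tr(\varrho^0)$ satisfies every identity of $\mcW$, and then use minimality. The only cosmetic difference is that the paper proves $\varrho^0 \subseteq \theta$ for an arbitrary congruence $\theta$ with $A/\theta \in \mcW$, whereas you specialize this argument to $\theta = \varrho$ and phrase the conclusion as two inclusions; the content is identical.
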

\begin{proof}
We will first prove that the relation $\tr(\varrho^0)$ is a congruence relation. It is obviously reflexive and symmetric.
We will show that it preserves the operations of $\Om$.
Let $\omega \in \Om$ be an operation of arity $n$ and let $a_1,\dots, a_n, b_1,\dots, b_n \in A$ be such that $a_i\ \varrho^0\ b_i$ for each $1\leq i\leq n$. In particular, this means that there are identities $x^i_1 \dots x^i_{k_i} p^i = x^i_1 \dots x^i_{k_i} q^i$ satisfied in $\mcW$, and elements $d^i_1, \dots, d^i_{k_i}\in A$ such that $a_i = d^i_1 \dots d^i_{k_i} p^i$ and $b_i = d^i_1 \dots d^i_{k_i} q^i$.
Then note that the identity
\[
(x^1_1 \dots x^1_{k_1} p^1) \dots (x^n_1 \dots x^n_{k_n} p^n)\, \omega = (x^1_1 \dots x^1_{k_1} q^1) \dots (x^n_1 \dots x^n_{k_n} q^n)\, \omega
\]
is also satisfied in $\mcW$. As $a_1 \dots a_n \omega = (d^1_1 \dots d^1_{k_i} p^1) \dots (d^n_1 \dots d^n_{k_n} p^n)\ \omega$ and $b_1 \dots b_n \omega = (d^1_1 \dots d^1_{k_1} q^1) \dots (d^n_1 \dots d^n_{k_n} q^n)\ \omega$, it follows that
\[
a_1 \dots a_n \omega  \ \varrho^0 \  b_1 \dots b_n \omega.
\]
Now by Lemma~\ref{L:transclos}, we conclude that $\tr(\varrho^0)$ is a congruence relation.

It remains to show that $A/\tr(\varrho^0)\in \mcW$, and that $\tr(\varrho^0)$ is the smallest congruence relation with this property. However, if $x_1 \dots x_n p = x_1 \dots x_n q$ is an identity true in $\mcW$, then
\begin{align*}
&\big(a_1/\tr(\varrho^0)\big) \dots \big(a_n/\tr(\varrho^0)\big)\, p =(a_1 \dots a_n p)/\tr(\varrho^0)\\
&=(a_1 \dots a_n q)/\tr(\varrho^0) = \big(a_1/\tr(\varrho^0)\big) \dots \big(a_n/\tr(\varrho^0)\big)\, q,
\end{align*}
since, by definition, $(a_1 \dots a_n p, a_1 \dots a_n q) \, \in \, \varrho^0 \subseteq \tr(\varrho^0)$. Thus $A/\tr(\varrho^0)$ satisfies every identity true in $\mcW$, and hence $A/\tr(\varrho^0)\in \mcW$.

On the other hand, if $\theta$ is a congruence of $A$ such that $A/\theta\in \mcW$, and $a_1, \dots ,a_n \in A$, then
$(a_1/\theta) \dots (a_n/\theta)\, p = (a_1/\theta) \dots (a_n/\theta)\, q$,
and hence $a_1 \dots a_n p\ \theta\ a_1 \dots a_n q$. Thus $\varrho^0 \subseteq \theta$, and we have $\tr(\varrho^0)\subseteq \tr(\theta) = \theta$.
\end{proof}

\begin{theorem}\label{T:main}
Let $\mcV$ and $\mcW$ be varieties of the same similarity type without nullary operation symbols, and let $\mcW$ be idempotent.
If there exist terms $f(x,y)$ and $g(x,y)$ such that
\begin{itemize}
\item[(a)] $\mcV \models f(x,y) = x\ \mbox{and}\ \mcV \models g(x,y) = y$,
\item[(b)] $\mcW \models f(x,y) = g(x,y)$,
\end{itemize}
then the Mal'tsev product $\mcV \circ \mcW$ is a variety.
\end{theorem}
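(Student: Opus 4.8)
The plan is to prove equality of $\mcV \circ \mcW$ with the variety $\mcU$ defined by $\Sigma^p$, where $\Sigma = \mr{Id}(\mcV)$. By Theorem~\ref{T:thevar} we have $\mf{H}(\mcV \circ \mcW) = \mcU$, and $\mcV \circ \mcW \subseteq \mcU$ always holds, so $\mcV \circ \mcW$ is a variety as soon as $\mcU \subseteq \mcV \circ \mcW$, and this is the only inclusion I would prove. So I fix $A \in \mcU$ with $\mcW$-replica congruence $\varrho$. Since $\mcW$ is idempotent, every element of $A/\varrho$ is idempotent, so by Lemma~\ref{L:subcon} each $\varrho$-class is a subalgebra of $A$; by the idempotent description of the product it then suffices to show that each $\varrho$-class $B$ belongs to $\mcV$.

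The key preliminary step is a single-step projection lemma: if $(a,b) \in \varrho^0$, then $f(a,b) = a$ and $g(a,b) = b$ in $A$. Indeed, let $\bfx p = \bfx q$ be the $\mcW$-identity and $\bfa$ the tuple witnessing $(a,b) \in \varrho^0$, so $a = \bfa p$ and $b = \bfa q$. Because $\mcW \models p = q$ and $\mcW$ is idempotent, $p$ and $q$ are pairwise $\mcW$-equivalent term idempotents, so by Definition~\ref{D:eqbasprol} the identities $f(p,q) = p$ and $g(p,q) = q$ — the prolongations of the $\mcV$-identities $f(x,y) = x$ and $g(x,y) = y$ of hypothesis~(a) — lie in $\Sigma^p$, hence hold in $A$; evaluating at $\bfa$ gives the claim.

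The hard part is to upgrade this from single $\varrho^0$-steps to arbitrary elements of a block, and this is where (a) and (b) combine. I would show that $\varrho^0$ is already transitive, so that by Proposition~\ref{L:Vreplicaform} the replica congruence is $\varrho = \tr(\varrho^0) = \varrho^0$. Given $(a,b),(b,c) \in \varrho^0$ witnessed by $\mcW$-identities $p = q$, $p' = q'$ and tuples $\bfa,\bfb$ (with $a = \bfa p$, $b = \bfa q = \bfb p'$, $c = \bfb q'$), I would glue them using the two terms: set $P := f(p,p')$ and $Q := g(q,q')$ in the disjoint variables of $p$ and $p'$. Hypothesis~(b) gives $\mcW \models P = f(p,p') = g(p,p') = g(q,q') = Q$, so $P = Q$ is an $\mcW$-identity; and the single-step lemma gives $P(\bfa,\bfb) = f(a,b) = a$ and $Q(\bfa,\bfb) = g(b,c) = c$, witnessing $(a,c) \in \varrho^0$. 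Thus $\varrho = \varrho^0$, and the single-step lemma now yields $f(b,b') = b$ and $g(b,b') = b'$ for all $b,b'$ in a common block $B$.

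It then remains to deduce that $B$ satisfies every identity of $\mcV$. Given such an identity $u = v$ in variables $x_1,\dots,x_k$ and elements $b_1,\dots,b_k \in B$, I would build $f,g$-comb terms $r_1,\dots,r_k$, each a right-nested composition of $f$'s and $g$'s chosen so that $\mcV \models r_i = x_i$ (using $f$ as first and $g$ as second projection), while on $\mcW$, where $f = g$ by~(b), all the $r_i$ collapse to one and the same right-nested term. Hence the $r_i$ are pairwise $\mcW$-equivalent term idempotents, so $u(r_1,\dots,r_k) = v(r_1,\dots,r_k)$ is a prolongation in $\Sigma^p$ and holds in $A$. Evaluating at $b_1,\dots,b_k$, the projection property $f(b,b') = b$, $g(b,b') = b'$ together with the fact that $f$ and $g$ keep values inside $B$ (since $f(\bar b,\bar b) = \bar b = g(\bar b,\bar b)$ in the idempotent quotient) gives $r_i(b_1,\dots,b_k) = b_i$, whence $u(b_1,\dots,b_k) = v(b_1,\dots,b_k)$. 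Therefore $B \in \mcV$, completing $\mcU \subseteq \mcV \circ \mcW$. The only genuinely nonroutine point is the transitivity of $\varrho^0$; the gluing terms $f(p,p')$ and $g(q,q')$ are the crux, drawing their $\mcW$-equivalence from~(b) and their evaluations from~(a).
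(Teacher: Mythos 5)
Your proposal is correct and takes essentially the same route as the paper's own proof: both reduce the theorem to showing that every algebra satisfying $\Sigma^p$ (equivalently, every member of $\mf{H}(\mcV \circ \mcW)$, via Theorem~\ref{T:thevar}) lies in $\mcV \circ \mcW$, both prove transitivity of $\varrho^0$ by gluing two witnessing $\mcW$-identities into $f(p,p') = g(q,q')$ and evaluating with the prolonged projection identities, and both finish with nested $f,g$-comb terms that are $\mcW$-equivalent but project onto each variable inside a block. The differences are purely presentational: you isolate the single-step projection computation as a lemma where the paper does it inline, and your combs are right-nested rather than left-nested.
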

\begin{proof}
Let $A$ be a quotient of an algebra belonging to the Mal'tsev product $\mcV \circ \mcW$, i.e. $A \in \mf{H}(\mcV \circ \mcW)$. Our aim is to prove that $A$ itself belongs to $\mcV \circ \mcW$.

Recall that the algebra $A$ has the $\mcW$-replica $A/\varrho$, and by Proposition~\ref{L:Vreplicaform}, $\varrho = \tr{\varrho^0}$.
First, we will show that the relation $\varrho^0$ is transitive, which implies that $\varrho = \varrho^0$.
Let $a, b, c \in A$, and $a \, \varrho^0 \, b \, \varrho^0 \, c$. This means that
there are identities $x_1 \dots x_n p_1 = x_1 \dots x_n q_1$  and  $y_1 \dots y_m p_2 = y_1 \dots y_m q_2$ satisfied in $\mcW$, and elements
$d_1, \dots, d_n, e_1, \dots, e_m \in A$ such that
\begin{align*}
&a = d_1 \dots d_n p_1,\ \ b = d_1 \dots d_n q_1, \\
&b = e_1 \dots e_m p_2,\ \ c = e_1 \dots e_m q_2.
\end{align*}
Now let us consider the identity
\begin{equation}\label{E:newid}
f(x_1 \dots x_n p_1, y_1 \dots y_m p_2) = g(x_1 \dots x_n q_1, y_1 \dots y_m q_2).
\end{equation}
It is clear that \eqref{E:newid} is satisfied in $\mcW$.
On the other hand, since $\mcW$ is idempotent, it follows by Theorem~\ref{T:thevar} that $A$ satisfies the identities
\begin{equation}\label{E:f1g1}
f(p_1,q_1) = p_1\,\, \mbox{and} \,\, g(p_2,q_2) = q_2.
\end{equation}
Thus
\begin{equation*}
\begin{split}
        f(d_1\dots d_n p_1, e_1\dots e_m p_2) &=f(d_1\dots d_n p_1, b)\\
        &=f(d_1\dots d_n p_1, d_1\dots d_n q_1)=d_1\dots d_n p_1=a,
    \end{split}
\end{equation*}
and similarly
\begin{equation*}
    \begin{split}
        g(d_1 \dots d_n q_1, e_1 \dots e_m q_2) &= g(b, e_1 \dots e_m q_2)\\
        &= g(e_1 \dots e_m p_2, e_1 \dots e_m q_2) = e_1 \dots e_m q_2=c.
    \end{split}
\end{equation*}
Hence, for the elements $d_1, \dots, d_n, e_1, \dots, e_m$, the left-hand side of the identity \eqref{E:newid} equals $a$ and its right-hand side equals $c$.
Consequently, $a\ \varrho^0\ c$, whence $\varrho^0$ is transitive, and $\varrho = \varrho^0$.

Next, we will show that each congruence class of $\varrho$ satisfies the identities true in $\mcV$. Since $\mcW$ is idempotent, by Lemma \ref{L:subcon}, all congruence classes of $\varrho$ are subalgebras of $A$. Consider the identities
\begin{equation}\label{E:efge}
f(x,y) = x\ \mbox{and}\ g(x,y) = y,
\end{equation}
true in the variety $\mcV$. If $x_1 \dots x_n p = x_1 \dots x_n q$ is satisfied in $\mcW$, then by Theorem~\ref{T:thevar}, $A$ satisfies the identities
\begin{equation}\label{E:efgepq}
f(p,q) = p\ \mbox{and} \ g(p,q) = q.
\end{equation}
Now let $a, b \in A$ and $a\ \varrho\ b$. Since $\varrho = \varrho^0$, it follows that $a = d_1 \dots d_n p$ and $b = d_1\dots d_n q$ for some identity $x_1 \dots x_n p = x_1 \dots x_n q$ satisfied in $\mcW$ and some $d_1, \dots, d_n\in A$. Then
\[
f(a,b) = f(d_1 \dots d_n p, d_1\dots d_n q) = d_1 \dots d_n p = a,
\]
and similarly,
\[
g(a,b) = g(d_1 \dots d_n p, d_1 \dots d_n q) = d_1 \dots d_n q = b.
\]
Therefore $B$ satisfies the identities \eqref{E:efge}.
We will use this fact to show that $B$ satisfies any identity $x_1 \dots x_k u = x_1 \dots x_k v$ true in $\mcV$.
Define terms $t_1, \dots, t_k$ in the following way:
\begin{align*}
&x_1 \dots x_k t_1 = f( f( \dots f(f(f(x_1,x_2),x_3),x_4) \dots ), x_{k-1}), x_k),\\
&x_1 \dots x_k t_2 = f( f( \dots f(f(g(x_1,x_2),x_3),x_4) \dots ), x_{k-1}), x_k),\\
&x_1 \dots x_k t_3 = f( f( \dots f(g(f(x_1,x_2),x_3),x_4) \dots ), x_{k-1}), x_k),\\
&\cdots\\
&x_1 \dots x_k t_k = g( f( \dots f(f(f(x_1,x_2),x_3),x_4) \dots ), x_{k-1}), x_k).
\end{align*}
Since $\mcW$ satisfies the identity $f(x,y)= g(x,y)$, it also satisfies the identities $t_i = t_j$, for every $1 \leq i,j \leq k$. By Theorem~\ref{T:thevar}, it follows that $A$ satisfies the identity
\[
t_1 \dots t_k u = t_1 \dots t_k v.
\]
Now let $b_1, \dots, b_k$ be elements of $B$.
Since the identities \eqref{E:efge} hold in $B$, it follows that $b_1 \dots b_k t_i = b_i$ for every $1 \leq i \leq k$. Thus,
\begin{equation*}
    \begin{split}
        b_1 \dots b_k u &= (b_1 \dots b_k t_1) \dots (b_1 \dots b_k t_k) u\\
        &= (b_1 \dots b_k t_1)\dots (b_1 \dots b_k t_k) v = b_1 \dots b_k v.
    \end{split}
\end{equation*}
Consequently, $B$ belongs to $\mcV$.
\end{proof}

\begin{corollary}
Let $\mcU$ be any variety of $\Om$-algebras. Let $\mcV$ and $\mcW$ be subvarieties of $\mcU$ satisfying the assumptions of Theorem~\ref{T:main}. Then the Mal'tsev product $\mcV \circ_{\mcU} \mcW$ is a variety.
\end{corollary}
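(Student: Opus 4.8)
The plan is to deduce this directly from Theorem~\ref{T:main} together with the elementary identity
\[
\mcV \circ_{\mcU} \mcW = (\mcV \circ \mcW) \cap \mcU,
\]
already noted in the introduction, using that an intersection of two varieties is again a variety.

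First I would observe that the hypotheses imposed here on $\mcV$ and $\mcW$ are precisely those of Theorem~\ref{T:main}: they are varieties of the same type without nullary operations, $\mcW$ is idempotent, and the terms $f(x,y)$ and $g(x,y)$ satisfy conditions (a) and (b). The extra requirement that $\mcV$ and $\mcW$ be subvarieties of $\mcU$ is not needed by Theorem~\ref{T:main}. Hence that theorem applies and yields that the absolute Mal'tsev product $\mcV \circ \mcW = \mcV \circ_{\mcT} \mcW$ is a variety.

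Next I would verify the displayed set equality. For any algebra $A$ of the given type, membership in a Mal'tsev product $\mcV \circ_{\mc D} \mcW$ is governed by the existence of a congruence $\theta$ with $A/\theta \in \mcW$ such that every $\theta$-class which is a subalgebra of $A$ belongs to $\mcV$; this existence statement refers only to $A$ and is independent of the ambient class $\mc D$. Thus the sole effect of replacing $\mc D = \mcT$ by $\mc D = \mcU$ is to impose the additional constraint $A \in \mcU$. Since every algebra of the type lies in $\mcT$, it follows that $A \in \mcV \circ_{\mcU} \mcW$ if and only if $A \in \mcU$ and $A \in \mcV \circ \mcW$, which is exactly the claimed equality. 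Equivalently, one may note that because $\mcW$ is idempotent the congruence $\theta$ may be taken to be the intrinsic $\mcW$-replica congruence $\varrho$ of $A$, whose every class is a subalgebra by Lemma~\ref{L:subcon}, and $\varrho$ does not depend on whether $A$ is viewed inside $\mcU$ or inside $\mcT$.

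Finally, since $\mcV \circ \mcW$ is a variety by the first step and $\mcU$ is a variety by hypothesis, the right-hand side $(\mcV \circ \mcW) \cap \mcU = \mcV \circ_{\mcU} \mcW$ is a variety, which completes the argument. I do not anticipate a genuine obstacle here: the only point deserving a line of justification is the set equality, and this is immediate once one observes that the defining congruence condition is a property of $A$ alone, unaffected by the choice of ambient variety.
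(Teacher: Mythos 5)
Your proof is correct and follows essentially the same route as the paper: the corollary appears there without a written proof, being an immediate consequence of Theorem~\ref{T:main} together with the identity $\mcV \circ_{\mcU} \mcW = (\mcV \circ \mcW) \cap \mcU$ recorded in the introduction, and your argument is exactly this combination. Your explicit verification of that set equality (that the defining congruence condition is a property of $A$ alone, so relativizing to $\mcU$ merely adds the constraint $A \in \mcU$) simply spells out what the paper leaves implicit.
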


The main result of \cite{BPR19} (Theorem~\ref{T:BPR}) follows easily as a corollary of Theorem~\ref{T:main}.

\begin{corollary}\cite{BPR19}\label{C:BPR}
Let $\mcV$ be a strongly irregular variety of a plural type $\tau$ and $\mcS_{\tau}$ be the variety of type $\tau$ that is equivalent to the variety of semilattices. Then $\mcV \circ \mcS_{\tau}$ is a variety.
\end{corollary}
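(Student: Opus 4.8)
The plan is to deduce the statement directly from Theorem~\ref{T:main} by verifying its hypotheses for the pair $\mcV$ and $\mcW := \mcS_\tau$. Since $\mcS_\tau$ is (definitionally) equivalent to the variety of semilattices, it is idempotent, so the idempotency requirement on $\mcW$ holds at once. Moreover, as $\tau$ is plural it contains no nullary operation symbols, which matches the standing hypothesis of Theorem~\ref{T:main}. It therefore remains only to exhibit binary terms $f(x,y)$ and $g(x,y)$ satisfying conditions (a) and (b) of that theorem.

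By definition $\mcV$ is a strongly irregular variety, defined by a set of regular identities together with a strongly irregular identity $t(x,y) = x$, where $t(x,y)$ is a binary $\tau$-term actually containing both $x$ and $y$. I would simply take
\[
f(x,y) := t(x,y), \qquad g(x,y) := t(y,x),
\]
the second term being obtained from $t$ by interchanging its two variables. Condition (a) is then immediate: the equation $\mcV \models f(x,y) = t(x,y) = x$ is the defining strongly irregular identity, and substituting $y$ for $x$ and $x$ for $y$ in that identity yields $\mcV \models g(x,y) = t(y,x) = y$.

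For condition (b) I would invoke the characterisation of identities of semilattices recalled in the text: two $\tau$-terms are $\mcS_\tau$-equivalent if and only if they contain exactly the same variables. Because $t(x,y)$ contains both $x$ and $y$, the swapped term $t(y,x)$ does as well, so both $f$ and $g$ have variable set $\{x,y\}$; consequently $\mcS_\tau \models f(x,y) = g(x,y)$. With (a) and (b) verified, Theorem~\ref{T:main} yields that $\mcV \circ \mcS_\tau$ is a variety.

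The argument is short and almost entirely routine once the terms are chosen; the only step that needs genuine care is condition (b), and even there the substance reduces to the fact that $\mcS_\tau$-equivalence of terms coincides with equality of their variable sets. The choice of $g$ as a variable-swapped copy of $t$ is precisely what makes this work: reversing the roles of $x$ and $y$ produces, in $\mcV$, the companion identity $g(x,y) = y$, while leaving the variable set unchanged so that $f$ and $g$ remain identified in $\mcS_\tau$.
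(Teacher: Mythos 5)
Your proposal is correct and follows essentially the same route as the paper's own proof: the same choice $f(x,y) = t(x,y)$, $g(x,y) = t(y,x)$, with condition (b) justified by the fact that $\mcS_\tau$ satisfies exactly the regular identities (your variable-set characterisation is just a restatement of this). The verification of (a) by swapping variables in the strongly irregular identity also matches the paper.
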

\begin{proof}
Let $t(x,y) = x$ be a strongly irregular identity satisfied in $\mcV$. Define terms $f(x,y) = t(x,y)$ and $g(x,y) = t(y,x)$. Since $\mcS_{\tau}$ satisfies all regular identities of type $\tau$, it also satisfies $f(x,y) = g(x,y)$. As $\mcV$ satisfies $f(x,y) = x$ and $g(x,y) = y$, and the variety $\mcS_{\tau}$ is idempotent, it follows by Theorem~\ref{T:main} that $\mcV \circ \mcS_{\tau}$ is a variety.
\end{proof}

Let us note that the satisfaction of the three identities of Theorem~\ref{T:main} implies that the variety $\mcV \cap \mcW$ is trivial. That suggests the following conjecture: If $\mcV \cap \mcW$ is trivial, then the Mal'tsev product $\mcV \circ \mcW$ is a variety. The following example
shows that this conjecture is false. The example was originally provided by C. Bergman, however with a different proof.

\begin{example}
Let $\mcV$ be any variety of commutative groupoids, and let $\mc{LZ}$ be the variety of left-zero bands. Then the intersection $\mcV \cap \mc{LZ}$ is the trivial variety $\mc{E}$.

Let $A$ be the groupoid whose multiplication table is given below.

\begin{center}
    \begin{tabular}{c|cccc}
   $\cdot$ &0&1&2&3\\
    \hline
    0& 0 & 0 & 0 & 0\\
    1& 0 & 1 & 0 & 0\\
    2& 2 & 2 & 2 & 2\\
    3& 2 & 3 & 2 & 3\\
    \end{tabular}
\end{center}

The $\mc{LZ}$-replica congruence of $A$ is the congruence $\varrho$ with the congruence classes $\{0, 1\}$ and $\{2, 3\}$ which are semilattices. Hence $A$ is a member of $\mcV \circ \mc{LZ}$.

The groupoid $A$ has also a congruence $\theta$ with three congruence classes $\{0, 2\}, \{1\}, \{3\}$. The quotient $B = A/\theta$ has only one proper non-trivial congruence $\alpha$ with the classes $\{\{0,2\},\{3\}\}$ and $\{\{1\}\}$. The groupoid $B$ is neither commutative nor a left-zero band, and the quotient $B/\alpha$ is not a left-zero-band. Hence $B$ is not a member of $\mcV \circ \mc{LZ}$.
\end{example}

The following corollaries show that triviality of $\mcV \cap \mcW$ combined  with some types of congruence permutability is sufficient for the existence of the
two terms required by Theorem~\ref{T:main}. First recall that two congruences $\alpha$ and $\beta$ are $3$-\emph{permutable} (or $3$-\emph{permute}) if
\[
\alpha \circ \beta \circ \alpha = \beta \circ \alpha \circ \beta.
\]
An algebra is $3$-permutable if any two of its congruences $3$-permute, and a variety is $3$-permutable if all of its algebras are $3$-permutable.
Note as well the following.

\begin{lemma}\cite[Lemma~4.66]{MMT87}\label{L:joincong}
Let $\alpha$ and $\beta$ be congruences of an algebra $A$.
Then $\alpha \circ \beta \circ \alpha \supseteq \beta \circ \alpha \circ \beta$ if and only if $\alpha \vee \beta = \alpha \circ \beta \circ \alpha$.
\end{lemma}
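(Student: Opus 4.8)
The plan is to abbreviate $R := \alpha\circ\beta\circ\alpha$ and $S := \beta\circ\alpha\circ\beta$, and to prove the two implications separately. Throughout I will use the standard description of the join in the congruence lattice: since $\alpha$ and $\beta$ are reflexive, $\alpha\vee\beta$ is the transitive closure of $\alpha\cup\beta$, and hence the union of all finite alternating relational products of $\alpha$ and $\beta$. In particular, every such product is contained in $\alpha\vee\beta$; applying this to $R$ and to $S$ gives $R\subseteq\alpha\vee\beta$ and $S\subseteq\alpha\vee\beta$ with no hypothesis at all. I will also repeatedly use that $\alpha\circ\alpha=\alpha$ and $\beta\circ\beta=\beta$, since $\alpha$ and $\beta$ are reflexive and transitive.

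The backward implication is then immediate: if $\alpha\vee\beta=R$, then from $S\subseteq\alpha\vee\beta$ we get $S\subseteq R$, which is exactly the inclusion $R\supseteq S$. For the forward implication I assume $R\supseteq S$ and aim to show $\alpha\vee\beta=R$. Since $R\subseteq\alpha\vee\beta$ is automatic, it suffices to show that $R$ is itself a congruence containing both $\alpha$ and $\beta$; for then $\alpha\vee\beta\subseteq R$, and equality follows. Reflexivity of $R$ is clear. The inclusions $\alpha\subseteq R$ and $\beta\subseteq R$ follow from reflexivity of the omitted factors together with $\alpha\circ\alpha=\alpha$ (for instance $\alpha=\alpha\circ\alpha\subseteq\alpha\circ\beta\circ\alpha=R$). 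Symmetry follows from $R^{-1}=\alpha^{-1}\circ\beta^{-1}\circ\alpha^{-1}=R$, using that congruences are symmetric. Compatibility with the operations of $A$ holds because a relational product of operation‑preserving relations again preserves the operations (apply each basic operation componentwise along the witnessing chains), so that $R$, being a composition of congruences, is compatible.

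The one step carrying real content is transitivity, and this is precisely where the hypothesis $R\supseteq S$ is used; I expect it to be the main, and essentially the only, obstacle, everything else being formal. Collapsing the inner $\alpha\circ\alpha$ to $\alpha$ and then inserting $S\subseteq R=\alpha\circ\beta\circ\alpha$ and collapsing again gives
\[
R\circ R=\alpha\circ\beta\circ\alpha\circ\beta\circ\alpha=\alpha\circ S\circ\alpha\subseteq\alpha\circ R\circ\alpha=\alpha\circ\beta\circ\alpha=R,
\]
so $R\circ R\subseteq R$, and together with reflexivity this yields $R\circ R=R$, i.e. transitivity. Thus $R$ is a congruence containing $\alpha$ and $\beta$, giving $\alpha\vee\beta\subseteq R$ and hence $\alpha\vee\beta=R$, which completes the forward implication and the proof.
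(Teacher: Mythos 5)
Your proof is correct and complete. Note that the paper itself gives no proof of this lemma at all --- it is quoted verbatim from \cite[Lemma~4.66]{MMT87} --- so there is no internal argument to compare against; your write-up serves as a self-contained verification of the cited result. Every step checks out: the containments $\alpha\circ\beta\circ\alpha\subseteq\alpha\vee\beta$ and $\beta\circ\alpha\circ\beta\subseteq\alpha\vee\beta$ are indeed unconditional, which disposes of the backward implication, and for the forward implication your key computation
\[
R\circ R=\alpha\circ\beta\circ\alpha\circ\beta\circ\alpha=\alpha\circ(\beta\circ\alpha\circ\beta)\circ\alpha\subseteq\alpha\circ(\alpha\circ\beta\circ\alpha)\circ\alpha=\alpha\circ\beta\circ\alpha=R
\]
is exactly where the hypothesis is needed; together with reflexivity, symmetry (the palindrome observation), compatibility of relational products of compatible relations, and $\alpha,\beta\subseteq R$ via the diagonal, this shows $R$ is a congruence above $\alpha$ and $\beta$, forcing $\alpha\vee\beta\subseteq R$. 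The textbook route proves the same containment by induction on the length of the alternating products $\alpha\circ\beta\circ\alpha\circ\beta\circ\cdots$ exhausting the join; your argument that $R$ is itself idempotent under composition packages that induction into a single step, which is arguably cleaner.
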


We will also need one more lemma.

\begin{lemma}\label{L:3perm}
Let $\theta, \theta_1, \theta_2$ be congruences of an $\Om$-algebra $A$ such that $\theta \subseteq \theta_1, \theta_2$.
If the quotient $A/\theta$ is congruence $3$-permutable, then the congruences $\theta_1$ and $\theta_2$ are $3$-permutable.
\end{lemma}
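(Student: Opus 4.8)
The plan is to reduce the $3$-permutability of $\theta_1$ and $\theta_2$ to that of their images in the quotient $A/\theta$, using the correspondence theorem for congruences. Write $\bar A := A/\theta$, and for a congruence $\alpha$ of $A$ with $\theta \subseteq \alpha$ let $\bar\alpha := \alpha/\theta$ be the associated congruence of $\bar A$, determined by the rule that $(a/\theta, b/\theta) \in \bar\alpha$ if and only if $(a,b) \in \alpha$. This rule is well defined precisely because $\theta \subseteq \alpha$. In particular $\bar\theta_1$ and $\bar\theta_2$ are congruences of $\bar A$.

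First I would prove the single technical fact that carries the argument: relational composition is transported faithfully by the quotient map. That is, for any congruences $\alpha, \beta$ of $A$ with $\theta \subseteq \alpha, \beta$,
\[
(a,b) \in \alpha \circ \beta \quad\Longleftrightarrow\quad (a/\theta,\, b/\theta) \in \bar\alpha \circ \bar\beta .
\]
The left-to-right direction is immediate: a witness $c$ for $(a,b) \in \alpha \circ \beta$ projects to a witness $c/\theta$ in $\bar A$. For the converse, a witness in $\bar A$ for $(a/\theta, b/\theta) \in \bar\alpha \circ \bar\beta$ has the form $c/\theta$ for some $c \in A$, and the relations $(a/\theta, c/\theta) \in \bar\alpha$ and $(c/\theta, b/\theta) \in \bar\beta$ lift, by the defining rule above, to $(a,c) \in \alpha$ and $(c,b) \in \beta$, so $(a,b) \in \alpha \circ \beta$. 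Iterating this equivalence two more times gives
\[
(a,b) \in \theta_1 \circ \theta_2 \circ \theta_1 \iff (a/\theta,\, b/\theta) \in \bar\theta_1 \circ \bar\theta_2 \circ \bar\theta_1 ,
\]
and likewise with the roles of the two congruences swapped.

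With this in place the conclusion follows at once. Since $\bar A = A/\theta$ is assumed congruence $3$-permutable, its congruences satisfy $\bar\theta_1 \circ \bar\theta_2 \circ \bar\theta_1 = \bar\theta_2 \circ \bar\theta_1 \circ \bar\theta_2$. Chaining the two transport equivalences through this identity yields $(a,b) \in \theta_1 \circ \theta_2 \circ \theta_1$ if and only if $(a,b) \in \theta_2 \circ \theta_1 \circ \theta_2$, i.e. $\theta_1 \circ \theta_2 \circ \theta_1 = \theta_2 \circ \theta_1 \circ \theta_2$, which is exactly $3$-permutability of $\theta_1$ and $\theta_2$.

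The step I expect to need the most care---the genuine, if routine, heart of the argument---is the converse half of the composition-transport equivalence, since that is where the hypothesis $\theta \subseteq \theta_1, \theta_2$ is actually consumed: it is needed both to make each $\bar\theta_i$ well defined and to guarantee that an intermediate class found in $\bar A$ can be pulled back to an intermediate element of $A$ related by the original congruences. Everything else is formal manipulation of the correspondence.
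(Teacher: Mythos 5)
Your proof is correct and takes essentially the same approach as the paper's: project a chain witnessing $\theta_1 \circ \theta_2 \circ \theta_1$ down to $A/\theta$, apply $3$-permutability there, and lift the witnessing classes back to elements of $A$ using $\theta \subseteq \theta_1, \theta_2$. The only difference is presentational---you package the projection/lifting steps as an explicit transport equivalence and obtain both inclusions at once, whereas the paper chases elements directly and notes the reverse inclusion is analogous.
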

\begin{proof}
Let $a, b \in A$ and $a\ \theta_1 \circ \theta_2 \circ \theta_1\ b$. Then
\[
a/\theta \quad (\theta_1/\theta) \circ (\theta_2/\theta) \circ (\theta_1/\theta) \quad b/\theta.
\]
By $3$-permutability of $A/\theta$, we also have
\[
a/\theta \quad (\theta_2/\theta) \circ (\theta_1/\theta) \circ (\theta_2/\theta) \quad b/\theta.
\]
So there are elements $c, d \in A$ such that
\[
a/\theta \quad (\theta_2/\theta) \quad c/\theta \quad (\theta_1/\theta) \quad d/\theta \quad (\theta_2/\theta) \quad b/\theta.
\]
Thus
$a\ \theta_2\ c\ \theta_1\ d\ \theta_2\ b$, whence finally
\[
\theta_1 \circ \theta_2 \circ \theta_1\subseteq \theta_2 \circ \theta_1 \circ \theta_2.
\]
The opposite inclusion is derived analogously.
\end{proof}

\begin{corollary}\label{C:cor1}
Let $\mcV$ and $\mcW$ be subvarieties of a variety $\mcU$ of $\Om$-algebras, and let $\mcW$ be idempotent.
If $\mcV \cap \mcW$ is the trivial variety and $\mcU$ is congruence $3$-permutable, then $\mcV \circ \mcW$ is a variety.
\end{corollary}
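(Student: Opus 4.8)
The plan is to deduce the corollary directly from Theorem~\ref{T:main}: since $\mcW$ is already assumed idempotent, it suffices to manufacture two binary terms $f(x,y)$ and $g(x,y)$ satisfying conditions (a) and (b) of that theorem, and the whole point is to read these terms off the congruence structure of a suitable free algebra.

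First I would pass to the free $\mcU$-algebra $F := \{x,y\}\mcU$ on two generators $x, y$. Because $\mcV$ and $\mcW$ are subvarieties of $\mcU$, the natural surjections of $F$ onto $\{x,y\}\mcV$ and $\{x,y\}\mcW$ have kernels $\alpha$ and $\beta$, the $\mcV$-replica and $\mcW$-replica congruences of $F$. The translation I will use throughout is that, for binary terms $u$ and $v$, one has $u \mathrel{\alpha} v$ in $F$ exactly when $\mcV \models u = v$, and $u \mathrel{\beta} v$ in $F$ exactly when $\mcW \models u = v$; this is just the defining property of the replica congruences of a free algebra.

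Next I would bring in the two hypotheses. Triviality of $\mcV \cap \mcW$ means that its two-generated free algebra $\{x,y\}(\mcV \cap \mcW) \cong F/(\alpha \vee \beta)$ collapses to a single point, so in particular $(x, y) \in \alpha \vee \beta$. Since $F \in \mcU$ and $\mcU$ is congruence $3$-permutable, the congruences $\alpha$ and $\beta$ of $F$ are $3$-permutable, i.e. $\alpha \circ \beta \circ \alpha = \beta \circ \alpha \circ \beta$; by Lemma~\ref{L:joincong} this equality (which certainly gives $\alpha \circ \beta \circ \alpha \supseteq \beta \circ \alpha \circ \beta$) forces $\alpha \vee \beta = \alpha \circ \beta \circ \alpha$. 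Combining the two facts, $(x,y) \in \alpha \circ \beta \circ \alpha$, so there are elements $f, g \in F$ with
\[
x \mathrel{\alpha} f \mathrel{\beta} g \mathrel{\alpha} y.
\]
Choosing representing binary terms $f(x,y)$ and $g(x,y)$ and reading off the three links through the translation above yields precisely $\mcV \models f(x,y) = x$, $\mcW \models f(x,y) = g(x,y)$, and $\mcV \models g(x,y) = y$, which are exactly conditions (a) and (b). Theorem~\ref{T:main} then gives that $\mcV \circ \mcW$ is a variety.

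I expect the only delicate point to be the bookkeeping in the translation step: one must ensure that $\alpha$ and $\beta$ are the replica congruences of the \emph{same} free algebra $F$, so that a single chain of elements of $F$ produces terms in the common variables $x$ and $y$, and that the passage from triviality of $\mcV \cap \mcW$ to $(x,y)\in\alpha\vee\beta$ is correctly justified. The congruence-permutability hypothesis is invoked in exactly one place—to rewrite the join $\alpha \vee \beta$ as the length-three relational product $\alpha \circ \beta \circ \alpha$—and everything else is routine.
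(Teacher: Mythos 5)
Your proof is correct, and its skeleton matches the paper's: both arguments manufacture the terms $f$ and $g$ required by Theorem~\ref{T:main} by locating the pair $(x,y)$ in the join of the $\mcV$- and $\mcW$-replica congruences of a two-generated free algebra (using triviality of $\mcV \cap \mcW$), and then rewriting that join as a length-three relational product via $3$-permutability and Lemma~\ref{L:joincong}. The one genuine difference is the choice of ambient free algebra. The paper works in the absolutely free algebra $X\Om$ on $X=\{x,y\}$, whose congruences $\varrho_{\mcV}$ and $\varrho_{\mcW}$ are not directly covered by the hypothesis, since $X\Om$ need not lie in $\mcU$; it therefore needs the auxiliary Lemma~\ref{L:3perm} to lift $3$-permutability from the quotient $X\Om/\varrho_{\mcU} \in \mcU$ to the congruences $\varrho_{\mcV}, \varrho_{\mcW} \supseteq \varrho_{\mcU}$. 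You instead work in $F = \{x,y\}\mcU$ itself, which lies in $\mcU$, so any two of its congruences $3$-permute by hypothesis and Lemma~\ref{L:3perm} is bypassed entirely; this is a small but real simplification, and the two arguments are related by the correspondence theorem (your $\alpha$ and $\beta$ are exactly the paper's $\varrho_{\mcV}/\varrho_{\mcU}$ and $\varrho_{\mcW}/\varrho_{\mcU}$). Your remaining steps are sound; note that for the triviality step you do not even need the isomorphism $F/(\alpha \vee \beta) \cong \{x,y\}(\mcV\cap\mcW)$ — it suffices that $F/(\alpha\vee\beta)$ is a common quotient of $F/\alpha \in \mcV$ and $F/\beta \in \mcW$, hence lies in the trivial variety $\mcV \cap \mcW$, which already forces $(x,y) \in \alpha \vee \beta$.
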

\begin{proof}
Consider the $\mcV$-replica congruence $\varrho_{\mcV}$, the $\mcW$-replica congruence $\varrho_{\mcW}$ and the
$\mcU$-replica congruence $\varrho_{\mcU}$ of the free algebra $X\Om$ for $X = \{x,y\}$.
Then $\varrho_{\mcU} \subseteq \varrho_{\mcV}, \varrho_{\mcW}$. By assumption $X\Om/\varrho_{\mcU}$ is congruence $3$-permutable.
Hence, by Lemma~\ref{L:3perm}, the congruences $\varrho_{\mcV}$ and $\varrho_{\mcW}$ are $3$-permutable.
Thus
\[
\varrho_{\mcV} \vee \varrho_{\mcW} = \varrho_{\mcV} \circ \varrho_{\mcW} \circ \varrho_{\mcV}.
\]

The quotient $X\Om/(\varrho_{\mcV} \vee \varrho_{\mcW})$ lies in $\mcV \cap \mcW$, so it is a trivial algebra. It follows that  all the elements of $X\Om$ form one congruence class. In particular,
\[
x \, \varrho_{\mcV} \circ \varrho_{\mcW} \circ \varrho_{\mcV}  \, y.
\]
This means that there are terms $f(x,y)$ and $g(x,y)$ such that
\[
x \,\, \varrho_{\mcV} \,\, f(x,y) \,\, \varrho_{\mcW} \,\, g(x,y) \,\, \varrho_{\mcV} \,\, y.
\]
By definition of the congruences $\varrho_{\mcV}$ and $\varrho_{\mcW}$, the variety $\mcW$ satisfies the identity $f(x,y) = g(x,y)$ and the variety $\mcV$ satisfies the identities $f(x,y) = x$ and $g(x,y) = y$. By Theorem~\ref{T:main}, $\mcV \circ \mcW$ is a variety.
\end{proof}

In the proof of the next corollary, we need Theorem~$7.3 (2)$ of \cite{FMcK17}. We state it below as a lemma.

\begin{lemma}\label{L:FMth}
Let $A$ be an algebra with a congruence $\theta$.
If $P(x,y,z)$ is a term which is Mal'tsev on each congruence class of $\theta$, then for any other congruence $\psi$ of $A$,
\[
\theta \circ \psi \circ \theta \, \subseteq \, \psi \circ \theta \circ \psi.
\]
\end{lemma}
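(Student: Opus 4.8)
The plan is to prove the inclusion by a direct element chase, turning a given $\theta\circ\psi\circ\theta$-chain into an explicit $\psi\circ\theta\circ\psi$-chain assembled from $P$. So suppose $(a,b)\in\theta\circ\psi\circ\theta$, witnessed by elements $c,d\in A$ with
\[
a \mathrel{\theta} c \mathrel{\psi} d \mathrel{\theta} b .
\]
Here $a$ and $c$ lie in a single $\theta$-class, and $d$ and $b$ lie in a single $\theta$-class, so on each of these two blocks $P$ is a Mal'tsev operation, i.e. $P(s,t,t)=s$ and $P(s,s,t)=t$ whenever $s,t$ belong to the same block. My model is the classical proof that a \emph{global} Mal'tsev term makes $\theta$ and $\psi$ permute: from $a\mathrel{\theta}c\mathrel{\psi}b$ one sets $e=P(a,c,b)$ and gets $a=P(a,c,c)\mathrel{\psi}e$ and $e\mathrel{\theta}P(c,c,b)=b$. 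In the present setting the Mal'tsev identities are available only inside a single $\theta$-block, so the argument must be arranged so that every collapse of $P$ has all three of its arguments in one block.

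The key step is to guess two witnesses $x,u$ realizing $a\mathrel{\psi}x\mathrel{\theta}u\mathrel{\psi}b$. I would take
\[
x := P(a,d,d) \qquad\mbox{and}\qquad u := P(c,d,b),
\]
and check the three links. For $a\mathrel{\psi}x$: since $d\mathrel{\psi}c$ and $P$ preserves $\psi$, we have $x=P(a,d,d)\mathrel{\psi}P(a,c,c)$, and as $a,c$ lie in one block the Mal'tsev identity there gives $P(a,c,c)=a$. For $u\mathrel{\psi}b$: since $c\mathrel{\psi}d$, we have $u=P(c,d,b)\mathrel{\psi}P(d,d,b)$, and as $d,b$ lie in one block we get $P(d,d,b)=b$. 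The middle link $x\mathrel{\theta}u$ needs no Mal'tsev identity at all: the argument pairs $(a,c)$, $(d,d)$, $(d,b)$ all lie in $\theta$, so $P(a,d,d)\mathrel{\theta}P(c,d,b)$ by congruence compatibility. Concatenating the three links yields $(a,b)\in\psi\circ\theta\circ\psi$, which is the desired inclusion.

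I expect the sole genuine difficulty to be isolating the right pair $(x,u)$. The naive single middle element from the permutability proof fails, because the two Mal'tsev collapses it would require, one inside the block of $a,c$ and one inside the block of $d,b$, cannot both be driven by a single element whose arguments would then have to straddle the two distinct blocks. The device that resolves this is to let the central $\theta$-step absorb the change of block: build $x$ so that its collapse happens entirely in the block of $a,c$, build $u$ so that its collapse happens entirely in the block of $d,b$, and pay for the transition between them with a pure $\theta$-step, which is free. Once $(x,u)$ is in hand, every verification is a one-line combination of congruence compatibility with the two block-local Mal'tsev identities, so no further obstacle should arise.
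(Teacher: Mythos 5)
Your proof is correct, but note that it cannot be compared against a proof in the paper, because the paper gives none: Lemma~\ref{L:FMth} is imported verbatim as Theorem~7.3(2) of Freese and McKenzie \cite{FMcK17}, and the citation stands in for the argument. Your element chase is thus a self-contained verification of a quoted result, which is worth having. Checking it: from $a \mathrel{\theta} c \mathrel{\psi} d \mathrel{\theta} b$, your witnesses $x = P(a,d,d)$ and $u = P(c,d,b)$ do exactly what is claimed. The link $a \mathrel{\psi} x$ follows from $d \mathrel{\psi} c$ (compatibility of $\psi$ with $P$) together with the identity $P(a,c,c)=a$, which is legitimate because $a$ and $c$ lie in one $\theta$-block; the link $u \mathrel{\psi} b$ follows from $c \mathrel{\psi} d$ together with $P(d,d,b)=b$, legitimate because $d$ and $b$ lie in one $\theta$-block; and the middle link $x \mathrel{\theta} u$ needs only that the argument pairs $(a,c)$, $(d,d)$, $(d,b)$ all lie in $\theta$. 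Your structural remark is also the right diagnosis: the classical single-witness permutability argument fails here because it would require Mal'tsev collapses whose arguments straddle two distinct blocks, and your fix---arranging each collapse to happen inside one block and letting the free middle $\theta$-step pay for the change of block---is precisely the mechanism that makes the block-local hypothesis suffice. Note too that your argument uses only the two Mal'tsev identities for pairs inside a block, never closure of the block under $P$, so it proves the lemma under exactly the stated hypothesis.
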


\begin{corollary}\label{C:conpermid}
Let $\mcV$ be a congruence permutable variety and $\mcW$ be an idempotent variety. If $\mcV \cap \mcW$ is the trivial variety, then $\mcV \circ \mcW$ is a variety.
\end{corollary}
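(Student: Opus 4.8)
The plan is to reduce to Theorem~\ref{T:main} by manufacturing the two terms $f(x,y)$ and $g(x,y)$, just as in the proof of Corollary~\ref{C:cor1}, but with the $3$-permutability input (no longer available, since here no congruence $3$-permutable overvariety is assumed) replaced by the Mal'tsev term of $\mcV$ together with Lemma~\ref{L:FMth}. Concretely, I would set $\mcU := \mf{H}(\mcV \circ \mcW)$, which by Theorem~\ref{T:thevar} is the variety defined by $\Sigma^p$, and work inside the free algebra $G := X\mcU$ on the two-element set $X = \{x,y\}$. The point of using $G$ rather than the absolutely free algebra is Proposition~\ref{P:free}, which gives $G \in \mcV \circ \mcW$. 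Writing $\alpha$ and $\beta$ for the $\mcV$- and $\mcW$-replica congruences of $G$ (both $\mcV$ and $\mcW$ are contained in $\mcU$, so $G/\alpha \cong X\mcV$ and $G/\beta \cong X\mcW$), idempotency of $\mcW$ forces, via Lemma~\ref{L:subcon} applied to the idempotent elements of $G/\beta \in \mcW$, every $\beta$-class to be a subalgebra of $G$; membership $G \in \mcV \circ \mcW$ then forces every such class to lie in $\mcV$.

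This is precisely what legitimizes Lemma~\ref{L:FMth}. Since $\mcV$ is congruence permutable it has a Mal'tsev term $P(x,y,z)$, and because each $\beta$-class is a genuine $\mcV$-algebra, $P$ really is Mal'tsev on each $\beta$-class of $G$. Applying Lemma~\ref{L:FMth} with $\theta = \beta$ and $\psi = \alpha$ yields $\beta \circ \alpha \circ \beta \subseteq \alpha \circ \beta \circ \alpha$, and Lemma~\ref{L:joincong} then upgrades this single inclusion to the equality $\alpha \vee \beta = \alpha \circ \beta \circ \alpha$. I emphasize that only this one inclusion is needed, not full $3$-permutability of $\alpha$ and $\beta$.

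To finish, I would observe that $\alpha \vee \beta$ is the $(\mcV \cap \mcW)$-replica congruence of $G$; since $\mcV \cap \mcW$ is trivial, $G/(\alpha \vee \beta)$ is a one-element algebra, so in particular $x \mathrel{(\alpha \vee \beta)} y$ and hence $x \mathrel{(\alpha \circ \beta \circ \alpha)} y$. This produces elements $f(x,y), g(x,y) \in G$ --- and, $G$ being generated by $\{x,y\}$, honest binary terms --- with $x \mathrel{\alpha} f \mathrel{\beta} g \mathrel{\alpha} y$. Reading this chain off through the isomorphisms $G/\alpha \cong X\mcV$ and $G/\beta \cong X\mcW$ gives exactly $\mcV \models f(x,y) = x$, $\mcV \models g(x,y) = y$ and $\mcW \models f(x,y) = g(x,y)$, which are hypotheses (a) and (b) of Theorem~\ref{T:main}. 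Applying that theorem shows $\mcV \circ \mcW$ is a variety.

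The only genuine obstacle is the one already flagged: ensuring that $P$ is Mal'tsev on the $\beta$-classes. On the absolutely free algebra the Mal'tsev identities hold nowhere, so the argument would collapse there; the whole point is that in $X\mcU$ the $\mcW$-replica classes are actual members of $\mcV$, which is what makes Lemma~\ref{L:FMth} applicable. Every other step is a routine reading of replica congruences as identities of the respective varieties.
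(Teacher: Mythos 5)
Your proof is correct and follows essentially the same route as the paper's: both pass to the free algebra $X\mcU$ of $\mcU = \mf{H}(\mcV \circ \mcW)$ (via Proposition~\ref{P:free}) so that the $\mcW$-replica classes lie in $\mcV$ and Lemma~\ref{L:FMth} applies, then combine it with Lemma~\ref{L:joincong} and the triviality of $\mcV \cap \mcW$ to produce the terms $f(x,y)$ and $g(x,y)$ required by Theorem~\ref{T:main}. The only immaterial difference is that you work with the replica congruences of $X\mcU$ itself throughout, whereas the paper lifts the inclusion back to the replica congruences of the absolutely free algebra $X\Om$ before applying Lemma~\ref{L:joincong} and extracting the terms there.
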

\begin{proof}
Let $\mcU = \mf{H}(\mcV \circ \mcW)$, and let
congruences $\varrho_{\mcV}$, $\varrho_{\mcW}$ and $\varrho_{\mcU}$ of the free algebra $X\Om$ for $X = \{x,y\}$ be defined the same way as in the proof of Corollary~\ref{C:cor1}. By Proposition~\ref{P:free} and Theorem~\ref{T:thevar}, it follows that the free $\mcU$-algebra $X\mcU$ on $X$ belongs to $\mcV \circ \mcW$.
Recall that $X\mcU$ is isomorphic to $X\Om/\varrho_{\mcU}$.
The $\mcW$-replica of $X\mcU$ is idempotent, so the congruence classes of the $\mcW$-replica congruence $\varrho_{\mcW}/\varrho_{\mcU}$ of $X\mcU$ are subalgebras of $X\mcU$ belonging to the congruence permutable variety $\mcV$. Thus there exists a term that is Mal'tsev on each congruence class of $\varrho_{\mcW}/\varrho_{\mcU}$. Then by Lemma~\ref{L:FMth},
\[
\varrho_{\mcW}/\varrho_{\mcU} \circ \varrho_{\mcV}/\varrho_{\mcU} \circ \varrho_{\mcW}/\varrho_{\mcU} \subseteq \varrho_{\mcV}/\varrho_{\mcU} \circ \varrho_{\mcW}/\varrho_{\mcU} \circ \varrho_{\mcV}/\varrho_{\mcU}.
\]
This implies that
\[
\varrho_{\mcW} \circ \varrho_{\mcV} \circ \varrho_{\mcW} \subseteq \varrho_{\mcV} \circ \varrho_{\mcW} \circ \varrho_{\mcV}.
\]
Therefore, by Lemma~\ref{L:joincong},
\[
\varrho_{\mcV} \vee \varrho_{\mcW} = \varrho_{\mcV} \circ \varrho_{\mcW} \circ \varrho_{\mcV}.
\]
The remaining part of the proof proceeds exactly as in the last part of the proof of Corollary~\ref{C:cor1}.
\end{proof}

\section{Band sums of algebras}\label{S:Bands}

In this section we provide some applications of Theorem~\ref{T:main}. We first consider $\Om$-algebras of a plural type $\tau$ with binary and possibly unary operations. Recall that a \emph{band} $(S,\cdot)$ is an idempotent semigroup. (For properties of bands see \cite{H76}.) It can be considered as an algebra of the same type $\tau$ by defining, for each $\omega \in \Om$, $xy \omega := x \cdot y$ in the case $\omega$ is binary, and $x \omega = x$ if $\omega$ is unary.

In all examples below, $f(x,y)$ and $g(x,y)$ denote the terms one needs in order to apply Theorem~\ref{T:main}.

\begin{example}\label{Ex:LB} \textbf{(Lattices and bands)}. Let $\mc{L}$ be any variety of lattices $(L, +, \cdot)$ and $\mc{B}$ any variety of bands considered as algebras $(B, +, \cdot)$ satisfying $x + y = x \cdot y$. Let
\[
f(x,y) = x + xy \,\, \mbox{and} \,\, g(x,y) = xy + y.
\]
Then the variety of all lattices satisfies $f(x,y) = x$ and $g(x,y) = y$, and the variety of all bands satisfies $f(x,y) = x \cdot xy = xy = xy \cdot y = g(x,y)$. By Theorem~\ref{T:main}, it follows that $\mc{L} \circ \mc{B}$ is a variety.

A small alteration shows that the same holds for the Mal'tsev product of the variety of Boolean algebras and a variety of bands. This time we consider Boolean algebras as algebras $(A, +, \cdot,')$ with $0$ defined by $x \cdot x' = y \cdot y'$ and $1$ defined by $x + x' = y + y'$, and bands as algebras of the same type satisfying $x + y = x \cdot y$ and $x' = x$. The same terms $f(x,y)$ and $g(x,y)$ as for lattices show that the assumptions of Theorem~\ref{T:main} are satisfied.
\end{example}

\begin{example}\label{Ex:QB} \textbf{(Quasigroups and bands)}. We consider quasigroups as algebras $(Q, \cdot, /, \setminus)$ with three binary operations, and bands as algebras of the same type with three equal binary band operations. Let
\[
f(x,y) = (x \cdot y) / y \,\, \mbox{and} \,\, g(x,y) = x \setminus (x \cdot y).
\]
Then the varieties of quasigroups satisfy $f(x,y) = x$ and $g(x,y) = y$ and the varieties of bands satisfy $f(x,y) = g(x,y)$. Theorem~\ref{T:main} shows that for any variety $\mc{Q}$ of quasigroups and any variety $\mc{B}$ of bands, the Mal'tsev product $\mc{Q} \circ \mc{B}$ is a variety.

Again a small alteration shows that the Mal'tsev product of any variety of loops and any variety of bands is a variety. Loops are considered here as algebras of the same type as quasigroups with the identity $1$ defined by $x / x = y \setminus y$.

As quasigroups and loops are congruence permutable, and the intersecion $\mc{Q} \cap \mc{B}$ is trivial, the statements of this example may also be proved using Corollary \ref{C:conpermid}.
\end{example}

\begin{example}\label{Ex:GB} \textbf{(Groups and bands)}. Groups are loops with associative multiplication. So by Example~\ref{Ex:QB} it follows that the Mal'tsev product of the variety of groups and the variety of bands is a variety. However, instead of algebras with three binary operations, groups may be considered as algebras $(G, \cdot, ^{-1})$ with one binary and one unary operation, where the identity $1$ is defined as $x \cdot x^{-1}$. (See e.g. \cite{BPR19}.) Bands are considered as algebras of the same type. As terms of Theorem~\ref{T:main} we take
\[
f(x,y) = x \cdot y^{-1} \cdot y \,\, \mbox{and} \,\, g(x,y) = x \cdot x^{-1} \cdot y.
\]
It is easy to see that these terms satisfy the assumptions of Theorem~\ref{T:main}.

A similar argument shows that if $\mcV$ is a variety of rings or modules, and bands are considered as algebras of the same type as $\mcV$, then $\mcV \circ \mc{B}$ is a variety.
\end{example}

Examples \ref{Ex:LB}--\ref{Ex:GB} above may be extended to varieties of any plural type. First we show that bands may be considered as algebras of any plural type $\tau$. Let $(A, \cdot)$ be any band. For a given (plural) type $\tau$, define operations of this type on $A$ as follows. For each $n$-ary $\omega \in \Om$, define the operation $\omega$ on $A$ as $x_1 \dots x_n \omega := x_1 \cdot \ldots \cdot x_n$. In this way, one obtains an algebra $(A,\Om)$ of type $\tau$. The semigroup operation $\cdot$ is recovered via $x \cdot y = xy\dots y \omega$ for any at least binary $\omega \in \Om$. It satisfies the associative law, and each at least binary operation $\omega$ may be obtained by composition from this binary operation. On the other hand, let $(A,\Om)$ be an idempotent algebra of a (plural) type $\tau$ such that for any two at least binary $\omega$ and  $\omega'$ in $\Om$, one has $x y \dots y \omega = x y \dots y \omega'$ and this binary operation is associative. Then by defining $x \cdot y := x y \dots y \omega$, one obtains a band $(A,\cdot)$. It is easy to see that the varieties of bands and $\Om$-bands are (definitionally) equivalent.
If $(A,\cdot)$ belongs to some special variety $\mc{B}$ of bands, the identities defining $\mc{B}$ may be ``translated'' into the language of $\Om$-algebras using the binary operation $\cdot$ derived from $\Om$-operations. In this way one obtains the variety of $\Om$-bands equivalent to the variety $\mc{B}$.

\begin{theorem}\label{T:bands}
Let $\mcV$ be a variety of some plural type $\tau$, and let $\mc{B}$ be a variety of $\Om$-bands of the same type.
Let $f(x,y)$ and $g(x,y)$ be terms containing both variables $x$ and $y$, and such that the first variable of both terms is the same and the last variable of both terms is the same.
If
\[
\mcV \models f(x,y) = x\ \mbox{and}\ \mcV \models g(x,y) = y,
\]
then $\mcV \circ \mc{B}$ is a variety.
\end{theorem}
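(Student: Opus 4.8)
The plan is to deduce this from Theorem~\ref{T:main}, taking for the idempotent variety $\mcW$ the band variety $\mc{B}$, and for the two terms the given $f(x,y)$ and $g(x,y)$. Every band is idempotent, so the variety $\mc{B}$ of $\Om$-bands is idempotent, and hypothesis~(a) of Theorem~\ref{T:main} is precisely the assumption $\mcV \models f(x,y) = x$ and $\mcV \models g(x,y) = y$. Thus the only thing left to check is hypothesis~(b), namely that $\mc{B} \models f(x,y) = g(x,y)$. In fact I would prove the stronger statement that $f(x,y) = g(x,y)$ holds in the variety of \emph{all} $\Om$-bands, which then gives it in the subvariety $\mc{B}$ for free; in particular the conclusion does not depend on which variety of bands $\mc{B}$ is.

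To see this, I would first reduce the $\Om$-terms $f$ and $g$ to words in the associated band. In any $\Om$-band all at-least-binary operations coincide with a single associative idempotent binary product $\cdot$, and every unary operation acts as the identity; hence each of $f(x,y)$ and $g(x,y)$ evaluates to an element of the free band on $\{x,y\}$, obtained as the band product of the variables of the term taken in the order in which they occur (the frontier of the term). The key observation is that this reduction preserves three invariants: the \emph{content} (the set of variables occurring), the \emph{first} variable, and the \emph{last} variable. That the first and last letters are invariants of a free-band element follows from the homomorphisms of the free band onto the left-zero and right-zero bands on $\{x,y\}$, which send a word to its first, respectively last, letter, while the content is preserved under the semilattice reduct.

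Now I would invoke the classical structure of the two-generated free band (see \cite{H76}), which has exactly the six elements $x$, $y$, $xy$, $yx$, $xyx$, $yxy$. Among these, the four elements of content $\{x,y\}$ are $xy$, $yx$, $xyx$, $yxy$, and they are pairwise distinguished by their pairs of first and last letters, namely $(x,y)$, $(y,x)$, $(x,x)$, $(y,y)$. By hypothesis $f$ and $g$ both contain both variables and share their first and last variables, so their band reductions have the same content, the same first letter, and the same last letter; consequently they coincide in the free band, and $f(x,y) = g(x,y)$ holds in every $\Om$-band. This establishes hypothesis~(b), and Theorem~\ref{T:main} then yields that $\mcV \circ \mc{B}$ is a variety.

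The main obstacle is exactly the verification of~(b); everything else is bookkeeping around Theorem~\ref{T:main}. The substance of that step is the classification of the two-generated free band, so the argument rests on the fact that an element of full content in this free band is determined by its first and last letters. For a self-contained treatment one could instead verify directly the band identity $x\,w\,y = xy$ for any word $w$ whose content is contained in $\{x,y\}$, together with its three analogues for the other endpoint pairs, which is the same six-element computation in disguise.
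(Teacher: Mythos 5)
Your proposal is correct and takes essentially the same approach as the paper: both reduce the statement to hypothesis (b) of Theorem~\ref{T:main} and verify $\mc{B} \models f(x,y) = g(x,y)$ via the six-element free band on two generators, whose full-content elements are distinguished by their first and last letters. Your write-up simply makes explicit the details (reduction of $\Om$-terms to band words, invariance of content and endpoints) that the paper's one-line argument leaves implicit.
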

\begin{proof}
First recall that the free band on two generators $x$ and $y$ consists of the following elements: $x, y, xy, yx, xyx, yxy$. Hence if $f$ and $g$ contain both variables $x$ and $y$ and have the same first and the same last variable, then the idempotence of bands implies that $f(x,y) = g(x,y)$. Thus the assumptions of Theorem~\ref{T:main} are satisfied.
\end{proof}

Let us note that algebras in varieties $\mcV \circ \mc{B}$ of Theorem~\ref{T:bands} are band sums of $\mcV$-algebras. In particular, these varieties of band sums contain as a subvariety the variety $\mcV \circ \mcS_{\tau}$ of semilattice sums of $\mcV$-algebras (provided that $\mc{B}$ is not trivial).


\begin{thebibliography}{99}

\bibitem{B20}
Bergman, C.:
Joins and Maltsev products of congruence permutable varieties.
Algebra Universalis {\bf 81} (2020), Paper No. 16, 6 pp.

\bibitem{B18}
Bergman, C.:
Notes on Quasivarieties and Mal'tsev Products
(2014). https://iastate.box.com/v/maltsevprods. Accessed May 2020

\bibitem{CB12}
Bergman, C.:
Universal Algebra. Fundamentals and Selected Topics,
CRC Press, Boca Raton (2012)

\bibitem{BPR19}
Bergman, C., Penza, T., Romanowska, A.B.:
Semilattices sums of algebras and Mal'tsev products of varieties,
Algebra Universalis {\bf 81}, Paper No. 33, 23 pp. (2020)

\bibitem{C75}
Cs\`{a}k\`{a}ny, B.:
Congruences and subalgebras.
Ann. Univ. Sci. Budapest. E\"{o}tv\"{o}s Sect. Math. {\bf 18}, 37--44 (1975)

\bibitem{FMcK17}
Freese, R., McKenzie, R.:
Maltsev families of varieties closed under join or Maltsev product.
Algebra Universalis {\bf 77}, 29--50 (2017)

\bibitem{G79}
Gr\"{a}tzer, G.:
Universal Algebra,
2nd edn. Springer-Verlag, New York (1979)

\bibitem{H76}
Howie, J.M.:
An Introduction to Semigroup Theory,
Academic Press, London (1976)

\bibitem{H74}
Hungerford, T.W.:
Algebra.
Springer Verlag, New York (1974)

\bibitem{I84a}
Iskander, A.A.:
Extension of algebraic systems.
Trans. Amer. Math. Soc. {\bf 28}, 309--327 (1984)

\bibitem{I84b}
Iskander, A.A.:
Decompositions of universal algebras by idempotent algebras.
Algebra Universalis {\bf 18}, 274--294 (1984)

\bibitem{SP09}
Jakub\'{i}kov\'{a}-Studenovsk\'{a}, D., P\'{o}cs, J.:
Monounary Algebras.
Pavol Jozef \v{S}af\'{a}rik University in Ko\v{s}ice (2009)

\bibitem{L98}
Lawson, M.V.:
Inverse Semigroups. The Theory of Partial Symmetries.
World Scientific, Singapore (1998)

\bibitem{M67}
Mal'tsev, A.I.:
Multiplication of classes of algebraic systems.
Sibirsk. Mat. Zh. {\bf 8}, 346--365 (1967) ((Russian).
English translation in: The Metamathematics of Algebraic Systems. Collected Papers: 1936--1967)

\bibitem{M71}
Mal'tsev, A.I.:
The Metamathematics of Algebraic Systems. Collected Papers: 1936--1967.
North-Holland Publishing Co., Amsterdam (1971)

\bibitem{MMT87}
McKenzie, R.N., McNulty, G.F., Taylor, W.F.:
Algebras, Lattices, Varieties, vol. 1.
Wadsworth and Brooks/Cole, Monterey (1987)

\bibitem{N67}
Neumann, H.:
Varieties of Groups.
Springer Verlag, New York (1967)

\bibitem{PR92}
P\l onka, J., Romanowska, A.:
Semilattice sums.
In: Romanowska, A., Smith, J.D.H. (eds.) Universal Algebra and Quasigroup Theory, pp. 123--158.
Heldermann, Berlin (1992)

\bibitem{RS99}
Romanowska, A.B., Smith, J.D.H.:
Post-modern Algebra,
John Wiley and Sons, New York (1999)

\bibitem{RS02}
Romanowska, A.B., Smith, J.D.H.:
Modes,
World Scientific, Singapore (2002)

\end{thebibliography}
\end{document}